\begin{document}
    \title[Comforming FE Discretization of the Unsteady QGEs]{A Conforming
    Finite Element Discretization of the Streamfunction Form of the Unsteady
    Quasi-Geostrophic Equations}

    \author{Erich L Foster}
    \address{Basque Center for Applied Mathematics, Alameda Mazarredo, 14,
      48009 Bilbao, Basque Country -- Spain}
    \email{efoster@bcamath.org}
    \urladdr{http://www.math.vt.edu/people/erichlf}

    \author{Traian Iliescu \and David Wells}
    \address{Department of Mathematics, Virginia Tech, Blacksburg, VA
      24061-0123, U.S.A.}
    \email{iliescu@vt.edu \and drwells@vt.edu}
    \urladdr{http://www.math.vt.edu/people/iliescu \and
    http://www.math.vt.edu/people/drwells}

    \subjclass[2010]{65M60, 65M20, 76D99}

    \abstract{This paper presents a conforming finite element
    semi-discretization of the streamfunction form of the one-layer unsteady
    quasi-geostrophic equations, which are a commonly used model for large-scale
    wind-driven ocean circulation. We derive optimal error estimates and present
    numerical results.}

    \keywords{Quasi-geostrophic equations, finite element method, Argyris element.}

    \maketitle


  \section{Introduction} \label{sec:Intro}
  The \emph{quasi-geostrophic equations (QGE)}, a standard simplified mathematical
model for large scale oceanic and atmospheric flows \cite{Cushman11, Majda,
Pedlosky92, Vallis06}, are often used in climate models \cite{Dijkstra05}. We
consider a finite element (FE) discretization of the QGE to allow for better
modeling of irregular geometries. Indeed, it is important to represent features
like coastlines in ocean models; numerical artifacts can result from stepwise
boundaries, which can affect ocean circulation predictions
over long time integration \cite{Adcroft98, Dupont03, Wang08}.

Most analyses of the QGE have been done on the mixed streamfunction-vorticity
rather than the pure streamfunction form. This work focuses on the
latter, which has the advantage of known optimal error estimates (see the error
estimate 13.5 and Table 13.1 in \cite{Gunzburger89}). However, the disadvantage
of not using a mixed formulation is that the pure streamfunction form of the
QGE is a fourth-order problem: this necessitates the use of a $C^1$ FE space for
a conforming FE discretization.

In what follows we first introduce, in \autoref{sec:Intro}, the
streamfunction-vorticity form of the QGE and its nondimensionalization, followed
by the pure streamfunction form of the QGE\@. In \autoref{sec:Disc} we introduce
the functional setting and the FE discretization in space. From there, we
develop optimal error estimates in \autoref{sec:Errors} followed by, in
\autoref{sec:Results}, numerical verification of the error estimates developed
in \autoref{sec:Errors}.

  \section{The Quasi-Geostrophic Equations} \label{sec:Eqs}
  The QGE are usually written as follows (\eg equation (14.57) in
\cite{Vallis06}, equation (1.1) in \cite{Majda}, equation (1.1) in
\cite{Wang94}, and equation (1) in \cite{Greatbatch00}):
\begin{align}
  \frac{\partial q}{\partial t} + J(\psi , q) &= A \, \Delta q + F
    \label{qge_q_psi_dim_1}                                                   \\
  q &= \Delta \psi + \beta \, y , \label{qge_q_psi_dim_2}
\end{align}
where $q$ is the potential vorticity, $\psi$ is the velocity streamfunction,
$\beta$ is the coefficient multiplying the $y$-coordinate (which is oriented
northward) in the $\beta$-plane approximation \eqref{eqn:beta_plane}, $F$ is the
forcing, $A$ is the eddy viscosity parameterization, and $J(\cdot , \cdot)$ is
the Jacobian operator given by
\begin{align}
  J(\psi , q) := \frac{\partial \psi}{\partial x} \, \frac{\partial q}{\partial y} -
    \frac{\partial \psi}{\partial y} \, \frac{\partial q}{\partial x} . \label{eqn:jacobian}
\end{align}
The $\beta$-plane approximation reads
\begin{equation}
  f = f_0 + \beta \, y , \label{eqn:beta_plane}
\end{equation}
where $f$ is the Coriolis parameter and $f_0$ is the reference Coriolis
parameter (see the discussion on page 84 in \cite{Cushman94} or Section
2.3.2 in \cite{Vallis06}).
As noted in Chapter 10.7.2 in \cite{Vallis06} (see also \cite{San11}), the eddy
viscosity parameter $A$ in \eqref{qge_q_psi_dim_1} is usually several orders of
magnitude higher than the molecular viscosity. This choice allows the use of a
coarse mesh in numerical simulations. The horizontal velocity $\mathbf{u}$ can
be recovered from $\psi$ and $q$ by the formula
\begin{align}
  \mathbf{u} := \nabla^{\perp} \psi =
    \begin{pmatrix} - \frac{\partial \psi}{\partial y}                        \\
    \frac{\partial \psi}{\partial x}
  \end{pmatrix} .
\label{eqn:u_psi}
\end{align}
The computational domain considered in this report is the standard
\cite{Greatbatch00} rectangular, closed basin on a $\beta$-plane with the
$y$-coordinate increasing northward and the $x$-coordinate eastward. The center
of the basin is at $y=0$, the northern and southern boundaries are at $y = \pm
\, L$, respectively, and the western and eastern boundaries are at $x = 0$ and
$x = L$ (see Figure 1 in \cite{Greatbatch00}).

We are now ready to nondimensionalize the QGE
\eqref{qge_q_psi_dim_1}-\eqref{qge_q_psi_dim_2}. There are several ways of
nondimensionalizing the QGE, based on different scalings and involving different
parameters (see standard textbooks on geophysical fluid dynamics, such as
\cite{Cushman11,Majda,Pedlosky92,Vallis06}). Since the FE error analysis in
this report is based on a precise relationship among the nondimensional
parameters of the QGE, 
we present a careful nondimensionalization of the QGE below. We first need to
choose a length scale and a velocity scale-- the length scale we choose is $L$,
the width of the computational domain. To define the velocity scale, we first
need to specify the forcing term \(F\) in \eqref{qge_q_psi_dim_1}. To this end,
we follow the presentation in Section 14.1.1 in \cite{Vallis06} and assume that
\(F\) is the wind-stress curl at the top of the ocean:
\begin{equation}
  \label{eqn:ForcingWindstress}
  F = \dfrac{1}{\rho} \left(\dfrac{\partial \tau^y}{\partial x} -
  \dfrac{\partial \tau^x}{\partial y}\right),
\end{equation}
where \(\rho\) is the density of the fluid, \(\boldsymbol{\tau} = (\tau^x, \tau^y)\)
is the wind-stress at the top of the ocean (see also Section 2.12 and equation
(14.3) in \cite{Vallis06} and Section 5.4 in \cite{Cushman94}) and is measured
in \(N/m^2\) (\eg page 1462 in \cite{Greatbatch00}). To determine the
characteristic velocity scale, we use the Sverdrup balance given in equation
(14.20) in \cite{Vallis06} (see also Section 8.3 in \cite{Cushman94}):
\begin{equation}
  \label{eqn:VelocityHeightAndForcing}
  \beta \int v dz = \dfrac{1}{\rho} \left(\dfrac{\partial \tau^y}{\partial x} -
  \dfrac{\partial \tau^x}{\partial y}\right),
\end{equation}
in which the velocity component \(v\) is integrated along the depth of the
fluid. The Sverdrup balance in \eqref{eqn:VelocityHeightAndForcing} represents
the balance between wind-stress (\ie forcing) and \(\beta\)-effect, which yields
the Sverdrup velocity
\begin{equation}
    \label{eqn:SverdrupVelocity}
    U := \dfrac{\tau_0}{\rho H \beta L},
\end{equation}
where \(\tau_0\) is the amplitude of the wind stress and \(H\) is the depth of
the fluid. It is easy to check that the Sverdrup velocity defined in
\eqref{eqn:SverdrupVelocity} has velocity units. We note that the same Sverdrup
velocity is used in equation (8-11) in \cite{Cushman94} and on page 1462 in
\cite{Greatbatch00} (the latter has an extra \(\pi\) factor due to the
particular wind forcing employed). The Sverdrup velocity
\eqref{eqn:SverdrupVelocity} will be used as the characteristic velocity scale
in the nondimensionalization. Once the length and velocity scales are chosen,
the variables in the QGE \eqref{qge_q_psi_dim_1}-\eqref{qge_q_psi_dim_2} can be
nondimensionalized as follows:
\begin{align}
  x^* = \frac{x}{L}, \quad
  y^* = \frac{y}{L}, \quad
  t^* = \frac{t}{L / U}, \quad
  q^* = \frac{q}{\beta \, L}, \quad
  \psi^* = \frac{\psi}{U \, L} ,
  \label{eqn:nondimensional_variables}
\end{align}
where a superscript $^*$ denotes a nondimensional variable. We denote
derivatives taken with respect to nondimensional coordinates by \(\Delta^*\) and
$J^*(\cdot, \cdot)$. Using \eqref{eqn:nondimensional_variables}, the
nondimensionalization of \eqref{qge_q_psi_dim_2} is
\begin{align}
  \beta \, L \, q^* = \frac{1}{L^2} \, \Delta^* (U \, L \, \psi^*) + \beta \, (L \, y^*) .
  \label{qge_q_psi_nondim_1}
\end{align}
Dividing \eqref{qge_q_psi_nondim_1} by $\beta \, L$, we get:
\begin{align}
  q^* = \left( \frac{U}{\beta \, L^2} \right) \, \Delta^* \psi^* + y^* .
  \label{qge_q_psi_nondim_2}
\end{align}
Defining the \emph{Rossby number $Ro$} as
\begin{align}
  Ro := \frac{U}{\beta \, L^2} , \label{eqn:rossby_number}
\end{align}
equation \eqref{qge_q_psi_nondim_2} becomes
\begin{align}
  q^* = Ro \, \Delta^* \psi^* + y^* .
  \label{qge_q_psi_nondim_3}
\end{align}
Then we nondimensionalize \eqref{qge_q_psi_dim_1}. We start with the left-hand
side:
\newcommand{\pp}[2]{\dfrac{\partial #1}{\partial #2}}
\begin{align}
  \pp{q}{t} &= (\beta U) \pp{q^*}{t^*}, \label{eqn:qge_q_psi_nondim_5}        \\
  J(\psi,q) &= \pp{\psi}{x} \pp{q}{y} - \pp{\psi}{y} \pp{q}{x}
             = U \pp{\psi^*}{x^*} \beta \pp{q^*}{y^*}
             - U \pp{\psi^*}{y^*} \beta \pp{q^*}{x^*}
             = (\beta U) J^*(\psi^*,q^*). \label{eqn:qge_q_psi_nondim_6}
\end{align}
Next, we nondimensionalize the right-hand side of \eqref{qge_q_psi_dim_1}. The
first term can be nondimensionalized as
\begin{equation}
  \label{eqn:qge_q_psi_nondim_7}
  A \Delta q
  = A \left(\frac{\partial^2 q}{\partial x^2}
  + \frac{\partial^2 q}{\partial y^2} \right)
  = A \left(\frac{1}{L^2} \frac{\partial^2}{\partial {x^*}^2} (\beta L q^*) +
      \frac{1}{L^2}\frac{\partial^2}{\partial {y^*}^2} (\beta L q^*)\right)
  = A \frac{\beta}{L} \Delta^* q^* .
\end{equation}
Thus, inserting \eqref{eqn:qge_q_psi_nondim_5}-\eqref{eqn:qge_q_psi_nondim_7} in
\eqref{qge_q_psi_dim_1}, we get
\begin{align}
  ( \beta \, U) \, \frac{\partial q^*}{\partial t^*} + (  \beta \, U ) \, J^*(\psi^*,q^*)
  &= A \, \frac{\beta}{L} \, \Delta^* q^* + F .
  \label{eqn:qge_q_psi_nondim_8}
\end{align}
Dividing by $\beta \, U$, we get:
\begin{align}
  \frac{\partial q^*}{\partial t^*} + J^*(\psi^*,q^*) &= \left( \frac{A}{U \, L} \right) \,
    \Delta^* q^* + \frac{F}{\beta \, U} .
\label{eqn:qge_q_psi_nondim_9}
\end{align}
Defining the \emph{Reynolds number $Re$} as
\begin{align}
  Re := \frac{U \, L}{A} ,
  \label{eqn:reynolds_number}
\end{align}
equation \eqref{eqn:qge_q_psi_nondim_9} becomes
\begin{align}
  \frac{\partial q^*}{\partial t^*} + J^*(\psi^*,q^*) &= Re^{-1} \, \Delta^* q^*
    + \frac{F}{\beta\, U} .
  \label{eqn:qge_q_psi_nondim_10}
\end{align}
The last term on the right-hand side of \eqref{eqn:qge_q_psi_nondim_10} has the
following units:
\begin{equation}
    \label{eqn:ForcingUnits}
    \left[\dfrac{F}{\beta U}\right]
    \overset{\eqref{eqn:ForcingWindstress}, \eqref{eqn:SverdrupVelocity}}{\sim}
    \left[\dfrac{
     \dfrac{1}{\rho} \left(\dfrac{\partial \tau^y}{\partial x}
     - \dfrac{\partial \tau^x}{\partial y}\right)
    }
    {
      \beta \dfrac{\tau_0}{\rho H \beta L}
    }
    \right],
\end{equation}
which, after an obvious simplifications, is nondimensional. Thus,
\eqref{eqn:ForcingUnits} clearly shows that the last term on the right-hand side
of \eqref{eqn:qge_q_psi_nondim_10} is nondimensional, so
\eqref{eqn:qge_q_psi_nondim_10} becomes
\begin{align}
  \frac{\partial q^*}{\partial t^*} + J^*(\psi^*,q^*)
  &= Re^{-1} \, \Delta^* q^* + F^*,
\label{eqn:qge_q_psi_nondim_12}
\end{align}
where \(F^* = F/(\beta U)\). Dropping the $^*$ superscript in
\eqref{eqn:qge_q_psi_nondim_12} and \eqref{qge_q_psi_nondim_2}, we obtain the
nondimensional \textit{vorticity-streamfunction form} of the \emph{one-layer
quasi-geostrophic equations}
\begin{align}
  \frac{\partial q}{\partial t} + J(\psi , q) &= Re^{-1} \, \Delta q + F \label{qge_q_psi_1}\\
  q &= Ro \, \Delta \psi + y, \label{qge_q_psi_2}
\end{align}
where $Re$ and $Ro$ are the Reynolds and Rossby numbers, respectively.

Substituting \eqref{qge_q_psi_2} in \eqref{qge_q_psi_1} and dividing by $Ro$, we
get the \textit{pure streamfunction form} of the \emph{one-layer
  quasi-geostrophic equations}
\begin{align}
  \frac{\partial \left[ \Delta \psi \right]}{\partial t} - Re^{-1} \, \Delta^2 \psi + J(\psi
    , \Delta \psi) + Ro^{-1} \, \psi_x = Ro^{-1} \, F. \label{qge_psi_1}
\end{align}
We note that the streamfunction-vorticity form has two unknowns ($q$ and
$\psi$), whereas the streamfunction form has only one unknown ($\psi$). The
streamfunction-vorticity form, however, is more popular than the streamfunction
form, since the former is a second-order partial differential equation, whereas
the latter is a fourth-order partial differential equation\@.

We also note that \eqref{qge_q_psi_1}-\eqref{qge_q_psi_2} and \eqref{qge_psi_1}
are similar in form to the 2D Navier Stokes Equations (NSE) written in both the
streamfunction-vorticity and streamfunction forms.
There are, however, several significant differences between the QGE and the 2D
NSE\@. First, we note that the term $y$ in \eqref{qge_q_psi_2} and the
corresponding term $\psi_x$ in \eqref{qge_psi_1}, which model the \emph{rotation
effects} in the QGE, do not have counterparts in the 2D NSE\@. Furthermore, the
Rossby number, $Ro$, in the QGE, which is a measure of the rotation effects,
does not appear in the 2D NSE\@.

To ensure the velocity and the streamfunction are related by \(\mathbf{u} =
(\psi_y, -\psi_x)\) (which is the relation used in
\cite{Gunzburger89}), we will consider the QGE \eqref{qge_psi_1} with $\psi$
replaced with $-\psi$:
\begin{align}
  -\frac{\partial \left[ \Delta \psi \right]}{\partial t}
    + Re^{-1} \, \Delta^2 \psi + J(\psi , \Delta \psi)
    - Ro^{-1} \, \frac{\partial \psi}{\partial x} = Ro^{-1} \, F .
    \label{eqn:QGE_psi}
\end{align}

We consider the boundary and initial conditions
\begin{equation}
  \psi = \frac{\partial \psi}{\partial \mathbf{n}} = 0 \text{ on }
  \partial \Omega \text{ and }
  \psi(0) = \psi_0,
  \label{eqn:QGEBCsICs}
\end{equation}
which were used in \cite{Gunzburger89} for the streamfunction form of the 2D NSE\@.

  \section{Finite Element Discretization} \label{sec:Disc}
  In this section we build the mathematical framework for the FE discretization of
the QGE\@. To this end, we consider the strong formulation of the QGE in pure
streamfunction form \eqref{eqn:QGE_psi}. The following functional spaces will be
used:
\begin{align}
  L^2(0, T;H^2_0(\Omega)) &:= \left\{ \psi(t,\mathbf{x}):[0, T] \to H^2_0(\Omega):
    \int_{0}^{T}\! \|\Delta \psi\| \, dt < \infty \right\}                    \\
  L^{\infty}(0, T;H_0^1(\Omega)) &:= \left\{\psi(t,\mathbf{x}):[0, T] \to H_0^1(\Omega):
    \ess \sup_{0<t<T} \|\nabla \psi\| < \infty\right\}.
\end{align}
Additionally, let
\begin{equation}
  X := H^2_0(\Omega) = \left\{ \psi\in H^2(\Omega):
  \psi=\frac{\partial\psi}{\partial \mathbf{n}}=0
    \text{ on } \partial\Omega \right\}.
\end{equation}
Denote the \(L^2\) inner product by $(\cdot,\cdot)$. The strong formulation of
the QGE in pure streamfunction form \eqref{eqn:QGE_psi} reads: Find \(\psi \in
L^2(0, T; H_0^2(\Omega)) \cap L^\infty(0, T; H_0^1(\Omega))\) such that
\begin{align}
    (\nabla \psi_t, \nabla \chi) + Re^{-1} (\Delta
      \psi, \Delta \chi) + b(\psi;\psi,\chi) - Ro^{-1}(\psi_x,\chi)
      &= Ro^{-1} (F,\chi),\quad \forall \chi \in X,
  \label{eqn:QGEStrong}                                                       \\
  \psi(0) = \psi_0,
  \label{eqn:QGEStrongInitial}
\end{align}
where the trilinear form is defined as follows (see (13) in \cite{Foster} and
Section 13.1 in \cite{Gunzburger89}):
\begin{equation}
  b(\xi; \psi, \chi) = \int_{\Omega}\! \Delta \xi\, (\psi_y \chi_x - \psi_x
  \chi_y)\, d\mathbf{x}.
  \label{eqn:b}
\end{equation}
We assume that the strong formulation of the QGE
\eqref{eqn:QGEStrong}-\eqref{eqn:QGEStrongInitial} has a unique solution which
satisfies the following regularity property:
\begin{equation}
    \int_0^T \|\Delta \psi\|^4 dt < \infty.
\end{equation}
We note the solution of the strong formulation of the NSE satisfies a
similar regularity property (see definition 33 in \cite{Layton08}). We also
assume that \(\|F\|_{-2}\) is in \(L^2(0, T)\), where the dual norms are defined
by (see definition 24 in \cite{Layton08})
\begin{equation}
    \|F\|_{-1} = \sup_{v \in H_0^1(\Omega)} \dfrac{(F, v)}{|v|_1} \text{ and }
    \|F\|_{-2} = \sup_{v \in H_0^2(\Omega)} \dfrac{(F, v)}{|v|_2}.
\end{equation}

In what follows, we will use the following norms and seminorms: for all \(v \in
H_0^2(\Omega)\), we define (see \cite{Ciarlet}, page 14)
\begin{align*}
  |v|_2^2 &= \int_\Omega \left[
  \sum_{i = 1}^3 \left(\dfrac{\partial^2 v}{\partial x_i^2}\right)^2
  + \sum_{i, j = 1; i \neq j}^3
  \left(\dfrac{\partial^2 v}{\partial x_i \partial x_j}\right)^2
  \right] d\mathbf{x}                                                         \\
  ||\Delta v||^2 &= \int_\Omega \left[
  \sum_{i = 1}^3 \left(\dfrac{\partial^2 v}{\partial x_i^2}\right)^2
  + \sum_{i, j = 1; i \neq j}^3
  \dfrac{\partial^2 v}{\partial x_i^2} \dfrac{\partial^2 v}{\partial x_j^2}
  \right] d\mathbf{x}.                                                        \\
\end{align*}
It can be proven that \(|v|_2 = \|\Delta v\|,  \forall v \in X\), see (1.2.8) in
\cite{Ciarlet}. Thus, the seminorm \(v \to \|\Delta v\|\) is a norm in \(X =
H_0^2(\Omega)\), which is equivalent to the norm \(\|\cdot\|_2\). As a
byproduct, we obtain the following Poincar\'e-Friedrichs inequality: there
exists a finite, positive constant \(\Gamma_0\) such that for any \(\psi \in
H_0^2(\Omega)\),
\begin{equation}
  \label{eqn:pf-2nd-derivative}
  \|\nabla \psi\| \leq \Gamma_0 \|\Delta \psi\|.
\end{equation}

Let $\mathcal{T}^h$ denote a triangulation of $\Omega$ with mesh
size (maximum triangle diameter) $h$. We consider a \emph{conforming} FE
discretization of \eqref{eqn:QGEStrong}-\eqref{eqn:QGEStrongInitial}, \ie let
$X^h$ be piecewise polynomials such that $X^h \subset X = H_0^2(\Omega)$. The FE
discretization of the streamfunction form of the QGE
\eqref{eqn:QGEStrong}-\eqref{eqn:QGEStrongInitial} reads: Find \(\psi^h \in
L^2(0, T; X^h) \cap L^\infty(0, T; H_0^1(\Omega))\) such that, \(\forall \chi^h
\in X^h\),
\begin{align}
  \label{eqn:SemiDiscretization}
  (\nabla \psi^h_t, \nabla \chi^h)
  + Re^{-1} (\Delta \psi^h, \Delta \chi^h) + b(\psi^h;\psi^h,\chi^h) -
    Ro^{-1}(\psi^h_x,\chi^h) &= Ro^{-1} (F,\chi^h),                           \\
  \label{eqn:initialConditionProjection}
  \psi^h(0) &= \psi_0^h,
\end{align}
where \(\psi_0^h\) is the FE initial condition.
We assume \eqref{eqn:SemiDiscretization}-\eqref{eqn:initialConditionProjection}
has a unique solution \(\psi^h\).

  \section{Error Analysis} \label{sec:Errors}
  In this section we present the convergence and error analysis associated with
\eqref{eqn:SemiDiscretization}-\eqref{eqn:initialConditionProjection}. We
will use the same approach as the one used
in Section 4 of \cite{Foster}, which contains the error analysis for the
stationary QGE\@.

The following lemma will introduce some useful bounds for the forms introduced
in \autoref{sec:Disc}.
\begin{lemma} \label{lma:ContinuousForms}
  There exist finite constants $\Gamma_1, \Gamma_2 > 0$ such that for
  all $\psi, \chi, \varphi\in X$ the following inequalities hold:
  \begin{align}
    (\Delta \psi, \Delta \chi) &\le |\psi|_2\, |\chi|_2, \label{eqn:a1Cont}   \\
    b(\psi;\varphi,\chi) &\le \Gamma_1 |\psi|_2\, |\varphi|_2\, |\chi|_2,
      \label{eqn:BH2Bounds}                                                   \\
    (\psi_x,\chi) &\le \Gamma_2 \, |\psi|_2 \, |\chi|_2, \label{eqn:a3Cont}   \\
    (F,\chi) &\le \|F\|_{-2} \, |\chi|_2.
      \label{eqn:lCont}
  \end{align}
\end{lemma}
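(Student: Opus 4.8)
The plan is to prove each of the four bounds separately, relying on the fact (established in the excerpt) that $|v|_2 = \|\Delta v\|$ on $X = H_0^2(\Omega)$, together with standard Sobolev embeddings. For the first inequality \eqref{eqn:a1Cont}, I would simply apply the Cauchy--Schwarz inequality to the $L^2$ inner product $(\Delta\psi,\Delta\chi) \le \|\Delta\psi\|\,\|\Delta\chi\|$, and then rewrite $\|\Delta\psi\| = |\psi|_2$ and $\|\Delta\chi\| = |\chi|_2$ using the norm equivalence. No hidden constant is needed, which is why the right-hand side has constant $1$. The fourth inequality \eqref{eqn:lCont} is essentially the definition of the dual norm $\|F\|_{-2}$: by definition $\|F\|_{-2} = \sup_{v} (F,v)/|v|_2$, so $(F,\chi) \le \|F\|_{-2}\,|\chi|_2$ follows immediately for every $\chi \in X = H_0^2(\Omega)$.

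The third inequality \eqref{eqn:a3Cont} concerns the rotation term $(\psi_x,\chi)$. Here I would first apply Cauchy--Schwarz to get $(\psi_x,\chi) \le \|\psi_x\|\,\|\chi\|$. Then I would bound $\|\psi_x\| \le \|\nabla\psi\|$ trivially, and use the Poincar\'e--Friedrichs inequality \eqref{eqn:pf-2nd-derivative} to write $\|\nabla\psi\| \le \Gamma_0\|\Delta\psi\| = \Gamma_0|\psi|_2$. For the factor $\|\chi\|$ I would apply Poincar\'e twice (or a single higher-order Poincar\'e estimate) to control the $L^2$ norm by $|\chi|_2$; absorbing all the Poincar\'e constants into a single $\Gamma_2$ yields the claim.

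The main obstacle is the trilinear form bound \eqref{eqn:BH2Bounds}, which is the genuinely nonlinear estimate. Starting from the definition \eqref{eqn:b}, $b(\psi;\varphi,\chi) = \int_\Omega \Delta\psi\,(\varphi_y\chi_x - \varphi_x\chi_y)\,d\mathbf{x}$, I would apply a generalized H\"older inequality with exponents adapted to a borderline Sobolev embedding. The natural splitting is to keep $\Delta\psi$ in $L^2$ and place the two first-order derivative factors in $L^4$, giving $b(\psi;\varphi,\chi) \le \|\Delta\psi\|\,\|\nabla\varphi\|_{L^4}\,\|\nabla\chi\|_{L^4}$. The key analytic input is then the continuous Sobolev embedding $H_0^1(\Omega) \hookrightarrow L^4(\Omega)$ (valid in two dimensions, which is the setting of this model), applied to each first-derivative component so that $\|\nabla\varphi\|_{L^4} \le C\|\varphi\|_{H^2}$ and likewise for $\chi$. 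Converting the full $H^2$ norms back to the seminorm $|\cdot|_2 = \|\Delta\cdot\|$ via the norm equivalence on $H_0^2(\Omega)$, and collecting the embedding and equivalence constants into a single constant $\Gamma_1$, delivers the desired estimate $b(\psi;\varphi,\chi) \le \Gamma_1|\psi|_2|\varphi|_2|\chi|_2$. I would be careful that the $L^4$ embedding constant is dimension-dependent, but since all other bounds are stated abstractly with existential constants, folding it into $\Gamma_1$ is legitimate and matches the statement of the lemma.
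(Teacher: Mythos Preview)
Your argument is correct. The paper does not actually prove this lemma inline; it simply cites \cite{Foster}, \cite{FosterThesis}, and \cite{Cayco86} for the result. Your proofs of \eqref{eqn:a1Cont}, \eqref{eqn:a3Cont}, and \eqref{eqn:lCont} via Cauchy--Schwarz, Poincar\'e--Friedrichs, and the definition of the dual norm are the standard ones, and your treatment of \eqref{eqn:BH2Bounds} using the H\"older splitting $(2,4,4)$ together with the two-dimensional embedding $H^1 \hookrightarrow L^4$ matches exactly the technique the paper itself employs a few lines later in the proof of Lemma~\ref{lma:BH1Bound} (see \eqref{eqn:HolderInitialTrilinear}--\eqref{eqn:TrilinearApplyLadyzhenskaya}, where the Ladyzhenskaya inequality plays the role of your Sobolev embedding). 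So while there is nothing to compare against in the paper proper, your approach is entirely consistent with the methods the authors use elsewhere and with what the cited references contain.
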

For a proof of this result, see (12)-(21) of \cite{Foster}, (5.7)-(5.10) of
\cite{FosterThesis}, and inequalities (2.2)-(2.3) in \cite{Cayco86}.

\begin{prop} \label{prop:Stability}
  The solution of
  \eqref{eqn:SemiDiscretization}-\eqref{eqn:initialConditionProjection},
  $\psi^h$, is stable; for any $t>0$ the following inequality holds:
  \begin{equation}
    \frac{1}{2}\|\nabla \psi^h(t)\|^2 + \frac{Re^{-1}}{2}\int_{0}^{t}\! \|\Delta
      \psi^h(t')\|^2 \, dt' \le \frac{1}{2} \|\nabla \psi^h_0\|^2
      + \frac{Re\, Ro^{-2}}{2} \int_{0}^{t}\! \|F(t')\|^2_{-2}\, dt'.
    \label{eqn:Stability}
  \end{equation}
\end{prop}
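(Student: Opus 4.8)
\emph{Proof proposal.} The plan is to run the standard energy argument by testing the semi-discrete equation \eqref{eqn:SemiDiscretization} against the solution itself, $\chi^h = \psi^h$, and then exploiting the structural vanishing of the nonlinear and rotation terms. With this choice, the time-derivative term becomes $(\nabla \psi^h_t, \nabla \psi^h) = \tfrac{1}{2}\tfrac{d}{dt}\|\nabla \psi^h\|^2$ and the viscous term becomes $Re^{-1}(\Delta \psi^h, \Delta \psi^h) = Re^{-1}\|\Delta \psi^h\|^2$.

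The two cancellations that drive the estimate come next. For the trilinear form, the definition \eqref{eqn:b} gives $b(\psi^h;\psi^h,\psi^h) = \int_\Omega \Delta \psi^h\,(\psi^h_y \psi^h_x - \psi^h_x \psi^h_y)\,d\mathbf{x} = 0$, since the integrand vanishes pointwise (equivalently, $b$ is skew-symmetric in its last two arguments). For the rotation term, I would write $(\psi^h_x,\psi^h) = \tfrac{1}{2}\int_\Omega \partial_x\big((\psi^h)^2\big)\,d\mathbf{x}$ and integrate by parts; the resulting boundary integral vanishes because $\psi^h \in X^h \subset H_0^2(\Omega)$ forces $\psi^h = 0$ on $\partial\Omega$, so $(\psi^h_x,\psi^h)=0$ as well.

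After these cancellations the equation reduces to the identity $\tfrac{1}{2}\tfrac{d}{dt}\|\nabla \psi^h\|^2 + Re^{-1}\|\Delta \psi^h\|^2 = Ro^{-1}(F,\psi^h)$. I would bound the right-hand side using \eqref{eqn:lCont} together with the norm equivalence $|\psi^h|_2 = \|\Delta \psi^h\|$ recorded after \eqref{eqn:pf-2nd-derivative}, yielding $Ro^{-1}(F,\psi^h) \le Ro^{-1}\|F\|_{-2}\,\|\Delta \psi^h\|$. Applying Young's inequality with the weight tuned so that the $\|\Delta \psi^h\|^2$ contribution is absorbed by exactly half the dissipation, $Ro^{-1}\|F\|_{-2}\,\|\Delta \psi^h\| \le \tfrac{Re\,Ro^{-2}}{2}\|F\|_{-2}^2 + \tfrac{Re^{-1}}{2}\|\Delta \psi^h\|^2$, and subtracting the absorbed term gives the differential inequality $\tfrac{1}{2}\tfrac{d}{dt}\|\nabla \psi^h\|^2 + \tfrac{Re^{-1}}{2}\|\Delta \psi^h\|^2 \le \tfrac{Re\,Ro^{-2}}{2}\|F\|_{-2}^2$.

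The final step is to integrate this inequality in time from $0$ to $t$ and insert the initial condition $\psi^h(0)=\psi_0^h$ from \eqref{eqn:initialConditionProjection}, which reproduces \eqref{eqn:Stability} exactly. I do not anticipate a genuine obstacle: this is a textbook energy estimate, and the only points demanding care are verifying the two cancellations (skew-symmetry of $b$ and vanishing of the rotation term through the homogeneous boundary condition) and selecting the Young's inequality constant so that precisely half of the viscous dissipation survives on the left-hand side.
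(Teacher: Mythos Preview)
Your proposal is correct and follows essentially the same approach as the paper: test \eqref{eqn:SemiDiscretization} with $\chi^h=\psi^h$, use the vanishing of $b(\psi^h;\psi^h,\psi^h)$ and $(\psi^h_x,\psi^h)$, bound the forcing via the $\|\cdot\|_{-2}$ norm, apply Young's inequality with parameter $Re^{-1}$, and integrate in time. The only cosmetic difference is that you justify the two cancellations explicitly, whereas the paper cites a remark from \cite{Foster}.
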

\begin{proof}
  Take $\chi^h = \psi^h$ in \eqref{eqn:SemiDiscretization} and note that
  $b(\psi^h;\psi^h,\psi^h) = 0$ and $(\psi^h_x, \psi^h) = 0$ (see Remark 1 in
  \cite{Foster}). Using the definition of the \(\|\cdot\|_{-2}\) norm we get
  \begin{equation}
    \label{eqn:qgeCancelation}
    \frac{1}{2} \frac{d}{dt} \|\nabla \psi^h\|^2 + Re^{-1} \|\Delta \psi^h\|^2 =
      Ro^{-1} (F,\psi^h) \le Ro^{-1} \|F\|_{-2}\,\|\Delta \psi^h\|.
  \end{equation}
  Using the Young inequality in \eqref{eqn:qgeCancelation} we have
  \begin{equation}
    \frac{1}{2} \frac{d}{dt} \|\nabla \psi^h\|^2 + Re^{-1} \|\Delta \psi^h\|^2
      \le \frac{Ro^{-2}}{2\epsilon} \|F\|_{-2}^2 + \frac{\epsilon}{2}\|\Delta
      \psi^h\|^2.
      \label{eqn:HolderStability}
  \end{equation}
  Taking $\epsilon = Re^{-1}$ in \eqref{eqn:HolderStability} results in
  \begin{equation}
    \label{eqn:YoungSubstitution}
    \frac{1}{2} \frac{d}{dt} \|\nabla \psi^h\|^2 + \frac{Re^{-1}}{2} \|\Delta
      \psi^h\|^2 \le \frac{Re\,Ro^{-2}}{2} \|F\|_{-2}^2.
  \end{equation}
  Since \(\|F\|_{-2} \in L^2(0, T)\), integrating
  \eqref{eqn:YoungSubstitution} over $(0,t)$ gives the final result.
\end{proof}

The following lemma will be used in the proof of \autoref{lma:BH1Bound}.
\begin{lemma} \label{lma:trilinear}
  For $\psi,\,\xi,\,\chi\in H^2_0(\Omega)$, we have
  \begin{equation}
    b(\psi; \xi, \chi) = b^*(\chi; \xi, \psi) - b^*(\xi; \chi, \psi),
    \label{eqn:trilinearSplitBStar}
  \end{equation}
  where
  \begin{equation}
      b^*(\xi, \psi, \phi) = \int_\Omega
      (\xi_y \psi_{xy} - \xi_x \psi_{yy}) \phi_y -
      (\xi_x \psi_{xy} - \xi_y \psi_{xx}) \phi_x d\mathbf{x}
    \label{eqn:trilinearDefineBStar}
  \end{equation}
\end{lemma}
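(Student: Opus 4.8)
The plan is to prove \eqref{eqn:trilinearSplitBStar} by integrating by parts to move the two derivatives of the Laplacian off $\psi$, and then regrouping the result into the two permuted copies of $b^*$. Writing $G := \xi_y \chi_x - \xi_x \chi_y$ so that $b(\psi;\xi,\chi) = \int_\Omega (\psi_{xx} + \psi_{yy})\,G\,d\mathbf{x}$, I would integrate the $\psi_{xx}$ term by parts in $x$ and the $\psi_{yy}$ term by parts in $y$. Since $\psi \in H^2_0(\Omega)$, both $\psi$ and $\nabla\psi$ have vanishing trace on $\partial\Omega$ --- the condition $\psi = 0$ forces the tangential derivative to vanish while $\partial\psi/\partial\mathbf{n} = 0$ kills the normal derivative --- so every boundary term drops out. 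This produces the compact intermediate identity
\[
  b(\psi;\xi,\chi) = -\int_\Omega \nabla\psi \cdot \nabla G \, d\mathbf{x}.
\]

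Next I would expand $\nabla G$ using the product rule. Because $G$ is bilinear in the first derivatives of $\xi$ and of $\chi$, each component of $\nabla G$ separates cleanly into terms carrying a second derivative of $\xi$ (multiplied by a first derivative of $\chi$) and terms carrying a second derivative of $\chi$ (multiplied by a first derivative of $\xi$). Grouping the integrand $-\nabla\psi\cdot\nabla G$ along this split, I expect the terms involving $\xi_{xx}, \xi_{xy}, \xi_{yy}$ to assemble exactly into $b^*(\chi;\xi,\psi)$ and the terms involving $\chi_{xx}, \chi_{xy}, \chi_{yy}$ into $-b^*(\xi;\chi,\psi)$. Matching the coefficients of $\psi_x$ and $\psi_y$ in each group against the definition \eqref{eqn:trilinearDefineBStar}, with arguments permuted as in the statement, then yields \eqref{eqn:trilinearSplitBStar}.

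The genuinely analytic point, which I would state carefully, is the vanishing of the boundary integrals in the integration by parts; this rests on the full gradient $\nabla\psi$ having zero trace for $H^2_0$ functions, which can be justified either via the trace theorem or by first proving the identity for $\psi \in C_c^\infty(\Omega)$ and extending by density, using the continuity of both sides on $H^2_0(\Omega)$ (the bound \eqref{eqn:BH2Bounds} together with its analogue for $b^*$). The remaining work --- the regrouping --- is routine but error-prone: one must track the signs of the eight product-rule terms and confirm that the two argument-permuted instances of $b^*$ reproduce them exactly. I expect this sign bookkeeping to be the main practical obstacle, while the integration by parts is conceptually the crux.
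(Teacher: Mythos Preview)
Your proposal is correct: integrating $\int_\Omega \Delta\psi\,G\,d\mathbf{x}$ by parts to obtain $-\int_\Omega \nabla\psi\cdot\nabla G\,d\mathbf{x}$, expanding $\nabla G$ by the product rule, and sorting the eight terms into the two $b^*$ groups verifies \eqref{eqn:trilinearSplitBStar} exactly as you describe; the boundary terms vanish because $\nabla\psi$ has zero trace for $\psi\in H^2_0(\Omega)$, and your density remark handles the regularity issue cleanly. The paper itself does not give a proof but simply refers to equation~(8) and Lemma~5.6 of \cite{Fairag98}, where the same integration-by-parts computation is carried out, so your argument is essentially the one the cited reference supplies.
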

For a proof, see equation (8) and Lemma 5.6 in \cite{Fairag98}.

\begin{lemma} \label{lma:BH1Bound}
  There exist finite constants $\Gamma_3,\Gamma_4>0$ such that, for all
  $\psi,\, \varphi,\, \chi \in X$, the following inequalities hold:
  \begin{align}
    b(\psi;\varphi,\chi) &\le \Gamma_3 \|\Delta \psi\|\, \|\Delta \varphi\|\,
      \left(\|\nabla \chi\|^{\nicefrac{1}{2}}
      \|\Delta \chi\|^{\nicefrac{1}{2}}\right) \label{eqn:BH1BoundChi}        \\
    b(\psi;\varphi,\chi) &\le \Gamma_4 \left(\|\nabla \psi\|^{\nicefrac{1}{2}}
      \|\Delta \psi\|^{\nicefrac{1}{2}}\right)\,
      \|\Delta \varphi\|\, \|\Delta \chi\|. \label{eqn:BH1BoundPsi}
  \end{align}
\end{lemma}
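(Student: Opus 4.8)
The plan is to estimate the trilinear form $b(\psi;\varphi,\chi)$ by interpolating the $H^2$-bound from \autoref{lma:ContinuousForms} against a lower-order $H^1$-type bound, so that one of the three arguments appears with the fractional power $\|\nabla\cdot\|^{1/2}\|\Delta\cdot\|^{1/2}$. The natural starting point is the rewritten form supplied by \autoref{lma:trilinear}, namely $b(\psi;\xi,\chi)=b^*(\chi;\xi,\psi)-b^*(\xi;\chi,\psi)$, because this identity moves the Laplacian off $\psi$ and exposes $\psi$ only through its first derivatives $\psi_x,\psi_y$ in the definition of $b^*$. That is exactly the structure needed to produce the half-power on $\psi$ in \eqref{eqn:BH1BoundPsi}, where $\psi$ is the distinguished slow argument. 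For \eqref{eqn:BH1BoundChi}, where $\chi$ is distinguished, I would instead work from the original definition \eqref{eqn:b}, in which $\chi$ already enters only through $\chi_x,\chi_y$.

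The key mechanism is a Ladyzhenskaya-type (or Gagliardo--Nirenberg) interpolation inequality in two dimensions, $\|v\|_{L^4}\le C\,\|v\|^{1/2}\,\|\nabla v\|^{1/2}$, applied to the first derivatives of the distinguished argument. Concretely, for \eqref{eqn:BH1BoundChi} I would expand \eqref{eqn:b} and apply a generalized Hölder inequality with exponents $(2,4,4)$: place $\Delta\psi$ in $L^2$, the first derivatives of $\varphi$ in $L^4$, and the first derivatives of $\chi$ in $L^4$. Bounding $\|\nabla\varphi\|_{L^4}\le C\,\|\nabla\varphi\|^{1/2}\,\|\nabla\varphi\|_{H^1}^{1/2}\lesssim\|\Delta\varphi\|$ (using the norm equivalence $|v|_2=\|\Delta v\|$ on $X$ together with \eqref{eqn:pf-2nd-derivative}) absorbs $\varphi$ fully into $\|\Delta\varphi\|$, while the same interpolation applied to $\nabla\chi$ yields precisely the factor $\|\nabla\chi\|^{1/2}\|\Delta\chi\|^{1/2}$. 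Collecting constants gives $\Gamma_3$. For \eqref{eqn:BH1BoundPsi} the same Hölder/interpolation scheme is applied to the $b^*$ representation, with $\psi$ now playing the role that $\chi$ played above: its first derivatives are estimated by $\|\nabla\psi\|_{L^4}\lesssim\|\nabla\psi\|^{1/2}\|\Delta\psi\|^{1/2}$, while the second derivatives of $\xi$ (here $\varphi$) and the first derivatives of the remaining argument are controlled in $L^2$ and $L^4$ respectively and absorbed into $\|\Delta\varphi\|$ and $\|\Delta\chi\|$.

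The main obstacle is bookkeeping rather than any single hard estimate: in the $b^*$ form \eqref{eqn:trilinearDefineBStar} each term is a product of a first derivative of one function, a second derivative of another, and a first derivative of a third, so one must verify that the Hölder exponents can always be arranged so that exactly one factor receives the $L^4$--interpolation producing the half-power while the others are safely in $L^2$ or in an $L^4$ norm dominated by $\|\Delta\cdot\|$. I expect the delicate point to be confirming that the \emph{second}-derivative factors never need to carry the fractional interpolation (which would break the claimed form of the bound), and that the two $b^*$ terms in \autoref{lma:trilinear} combine without any cancellation-dependent structure being required---each term is estimated separately and summed. A secondary care point is that the interpolation inequality and the passage $\|\nabla v\|_{L^4}\lesssim\|\Delta v\|$ rely on $\Omega\subset\mathbb{R}^2$ being a bounded domain with sufficiently regular boundary, which is consistent with the rectangular basin fixed in \autoref{sec:Eqs}.
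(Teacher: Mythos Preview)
Your proposal is correct and follows essentially the same route as the paper: for \eqref{eqn:BH1BoundChi} the paper works directly from \eqref{eqn:b} with H\"older exponents $(2,4,4)$, applies Ladyzhenskaya to both $\|\nabla\varphi\|_{L^4}$ and $\|\nabla\chi\|_{L^4}$, and then uses \eqref{eqn:pf-2nd-derivative} to absorb $\|\nabla\varphi\|^{1/2}$ into $\|\Delta\varphi\|$; for \eqref{eqn:BH1BoundPsi} it invokes \autoref{lma:trilinear} to pass to the $b^*$ representation, again uses H\"older and Ladyzhenskaya, and applies \eqref{eqn:pf-2nd-derivative} to the non-$\psi$ factors. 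Your identification of why the $b^*$ rewriting is needed---to expose $\psi$ only through first derivatives---is exactly the point, and your concern about second-derivative factors never carrying the fractional interpolation is resolved automatically since in both $b^*$ terms the third slot (occupied by $\psi$) contains only first derivatives.
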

\begin{proof}
  To prove estimate \eqref{eqn:BH1BoundChi}, we apply the H\"older inequality to
  $b(\psi; \varphi, \chi)$:
  \begin{equation}
    \label{eqn:HolderInitialTrilinear}
    b(\psi;\varphi,\chi) \le \|\Delta \psi\|_{L^p} \|\nabla \varphi\|_{L^q}
    \|\nabla \chi\|_{L^r},
    \text{ where } \dfrac{1}{p} + \dfrac{1}{q} + \dfrac{1}{q} = 1.
  \end{equation}
  Letting \(p = 2\) and \(q = r = 4\) in \eqref{eqn:HolderInitialTrilinear}
  yields
  \begin{equation}
    \label{eqn:TrilinearHolderSubstitution}
    b(\psi;\varphi,\chi) \le \|\Delta \psi\| \|\nabla \varphi\|_{L^4} \|\nabla
      \chi\|_{L^4}.
  \end{equation}
  Applying the Ladyzhenskaya inequality twice (Theorem 4 in \cite{Layton08}) to
  the last two factors on the right hand side of \eqref{eqn:TrilinearHolderSubstitution}
  yields
  \begin{equation}
    \label{eqn:TrilinearApplyLadyzhenskaya}
    b(\psi;\varphi,\chi) \le \Gamma_5 \|\Delta \psi\|
      \|\nabla \varphi\|^{\nicefrac{1}{2}} \|\Delta \varphi\|^{\nicefrac{1}{2}}
      \|\nabla \chi\|^{\nicefrac{1}{2}} \|\Delta \chi\|^{\nicefrac{1}{2}},
  \end{equation}
  where \(\Gamma_5\) is a positive constant. Using \eqref{eqn:pf-2nd-derivative}
  on \(\|\nabla \varphi\|^{\nicefrac{1}{2}}\) in
  \eqref{eqn:TrilinearApplyLadyzhenskaya} gives
  \begin{equation*}
    b(\psi;\varphi,\chi) \le \Gamma_3 \|\Delta \psi\|\, \|\Delta \varphi\|\,
      \left(\|\nabla \chi\|^{\nicefrac{1}{2}}
      \|\Delta \chi\|^{\nicefrac{1}{2}}\right),
  \end{equation*}
  where $\Gamma_3$ is also a positive constant, which proves estimate
  \eqref{eqn:BH1BoundChi}.

  To prove estimate \eqref{eqn:BH1BoundPsi}, we first rewrite \(b(\psi; \varphi,
  \chi)\) with relations \eqref{eqn:trilinearSplitBStar} and
  \eqref{eqn:trilinearDefineBStar} in \autoref{lma:trilinear}:
  \begin{equation}
      \label{eqn:TrilinearFairag}
      b(\psi; \varphi, \chi) = b^*(\chi, \varphi, \psi)
      - b^*(\varphi, \chi, \psi).
  \end{equation}
  Next we apply the H\"older inequality to each of the terms on the right hand side of
  \eqref{eqn:TrilinearFairag}, obtaining
  \begin{equation}
      \label{eqn:TrilinearFairagHolder}
      b(\psi; \varphi, \chi) \leq
      \|\Delta \chi\| \|\nabla \varphi\|_{L^4} \|\nabla \psi\|_{L^4} +
      \|\Delta \varphi\| \|\nabla \chi\|_{L^4} \|\nabla \psi\|_{L^4}.
  \end{equation}
  We apply the Ladyzhenskaya inequality to each term on the right hand side of
  \eqref{eqn:TrilinearFairagHolder}:
  \begin{equation}
      \label{eqn:TrilinearFairagLadyzhenskaya}
      \begin{aligned}
        b(\psi; \varphi, \chi)\leq \,
        &\Gamma_6 \|\Delta \chi\|
        (\|\nabla \varphi\|^\half \|\Delta \varphi\|^\half)
        (\|\nabla \psi\|^\half \|\Delta \psi\|^\half)                         \\
        + \,
        &\Gamma_7 \|\Delta \varphi\|
        (\|\nabla \chi\|^\half \|\Delta \chi\|^\half)
        (\|\nabla \psi\|^\half \|\Delta \psi\|^\half),
      \end{aligned}
  \end{equation}
  where \(\Gamma_6\) and \(\Gamma_7\) are two positive constants. Finally, by
  applying \eqref{eqn:pf-2nd-derivative} to each term on the right hand side of
  \eqref{eqn:TrilinearFairagLadyzhenskaya} we achieve the desired result:
  \begin{equation*}
    b(\psi;\varphi,\chi) \le \Gamma_4 \left(\|\nabla \psi\|^{\nicefrac{1}{2}}
      \|\Delta \psi\|^{\nicefrac{1}{2}}\right)\,
      \|\Delta \varphi\|\, \|\Delta \chi\|.
  \end{equation*}
\end{proof}

The next theorem proves the convergence of the FE approximation $\psi^h$ to the
exact solution $\psi$. The proof is similar to
the proof for Theorem 22 in \cite{Layton08}.                                  \\
\begin{thm} \label{thm:SemiConvergence}
  Let $\psi$ be the unique solution of the QGE
  \eqref{eqn:QGEStrong}-\eqref{eqn:QGEStrongInitial} and \(\psi^h\) be its
  FE approximation in
  \eqref{eqn:SemiDiscretization}-\eqref{eqn:initialConditionProjection}. Then
  the following estimate holds:
  \begin{equation}
    \begin{split}
      \|\nabla \left( \psi - \psi^h\right)(T) \|^2
        &+ Re^{-1} \int_0^T\! \|\Delta \left(\psi - \psi^h\right)\|^2\, dt
        \le C\,\biggl\{\bigl\|\nabla\left(\psi - \psi^h\right)(0)\bigr\|^2    \\
      & + \inf_{\lambda^h : [0, T] \to X^h} \biggl[
        \|\nabla (\psi - \lambda_h)(0)\|
        + \int_0^T\! \bigl\|\nabla \left(\psi - \lambda^h\right)_t\bigr\|^2
        + \bigl\|\Delta \left(\psi - \lambda^h\right)\bigr\|^2\, dt           \\
      & + \bigl\|\Delta \left(\psi - \lambda^h\right)\bigr\|^2_{L^4(0,T;L^2)}
        + \|\nabla \left(\psi - \lambda^h\right)(T)\|^2\biggr]\biggr\},
    \end{split}
    \label{eqn:SemiConvergence}
  \end{equation}
  where $C$ is a generic positive constant which can depend on \(T, F, \psi_0,
  Re, Ro, \Gamma_0, \Gamma_1, \Gamma_2, \Gamma_3\), and \(\Gamma_4\), but not on
  the mesh size \(h\).
\end{thm}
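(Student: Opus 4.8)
The plan is to follow the standard energy-method error analysis for nonlinear parabolic problems, as in the proof of Theorem 22 of \cite{Layton08}. First I would introduce an arbitrary $\lambda^h:[0,T]\to X^h$ and split the error as $\psi-\psi^h=(\psi-\lambda^h)-(\psi^h-\lambda^h)=:\eta-\phi^h$, where $\eta$ is the approximation error and $\phi^h\in X^h$ is the discrete error. Because the discretization is conforming, $X^h\subset X$, I may test the continuous weak form \eqref{eqn:QGEStrong} with any $\chi^h\in X^h$ and subtract \eqref{eqn:SemiDiscretization} to obtain the error equation. Taking $\chi^h=\phi^h$ and using $(\nabla\phi^h_t,\nabla\phi^h)=\tfrac{1}{2}\tfrac{d}{dt}\|\nabla\phi^h\|^2$ together with the viscous identity $Re^{-1}(\Delta(\eta-\phi^h),\Delta\phi^h)=Re^{-1}(\Delta\eta,\Delta\phi^h)-Re^{-1}\|\Delta\phi^h\|^2$ isolates, on the left, the quantity $\tfrac{1}{2}\tfrac{d}{dt}\|\nabla\phi^h\|^2+Re^{-1}\|\Delta\phi^h\|^2$ that I intend to close with Gronwall's inequality.

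Next I would linearize the convective contribution. Writing $b(\psi;\psi,\phi^h)-b(\psi^h;\psi^h,\phi^h)=b(\psi-\psi^h;\psi,\phi^h)+b(\psi^h;\psi-\psi^h,\phi^h)$ and substituting $\psi-\psi^h=\eta-\phi^h$, the self-term $b(\psi^h;\phi^h,\phi^h)$ vanishes by the skew-symmetry already exploited in \autoref{prop:Stability}, leaving $b(\eta;\psi,\phi^h)-b(\phi^h;\psi,\phi^h)+b(\psi^h;\eta,\phi^h)$. The delicate one is $b(\phi^h;\psi,\phi^h)$, which carries the discrete error twice. Here I would invoke the sharp bound \eqref{eqn:BH1BoundPsi} of \autoref{lma:BH1Bound} to get $b(\phi^h;\psi,\phi^h)\le\Gamma_4\|\nabla\phi^h\|^{1/2}\|\Delta\phi^h\|^{3/2}\|\Delta\psi\|$, and then Young's inequality with exponents $4/3$ and $4$ to peel off a small multiple of $\|\Delta\phi^h\|^2$, absorbed on the left, against $C\,\|\Delta\psi\|^4\,\|\nabla\phi^h\|^2$. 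This $\|\Delta\psi\|^4$ factor is precisely what makes the regularity hypothesis $\int_0^T\|\Delta\psi\|^4\,dt<\infty$ indispensable, since it becomes the integrable coefficient in the Gronwall exponent.

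For the remaining terms I would proceed by Cauchy--Schwarz and Young. The viscous mismatch $Re^{-1}(\Delta\eta,\Delta\phi^h)$ and the time-derivative term $(\nabla\eta_t,\nabla\phi^h)$ produce the approximation quantities $\|\Delta\eta\|^2$ and $\|\nabla\eta_t\|^2$; the rotation term, whose $\phi^h$ self-interaction again vanishes, reduces to $Ro^{-1}(\eta_x,\phi^h)$ and is controlled by \eqref{eqn:a3Cont} and \eqref{eqn:pf-2nd-derivative}. For the convective cross terms $b(\eta;\psi,\phi^h)$ and $b(\psi^h;\eta,\phi^h)$ I would use the two complementary bounds \eqref{eqn:BH1BoundChi} and \eqref{eqn:BH1BoundPsi} of \autoref{lma:BH1Bound}, assigning in each the half-power to whichever factor keeps the analysis closed: the uniform bound on $\|\nabla\psi^h\|$ and the energy bound $\int_0^T\|\Delta\psi^h\|^2\,dt<\infty$ from \autoref{prop:Stability} let me, after Young and a H\"older estimate in time, absorb the $\|\Delta\phi^h\|^2$ parts and control the leftovers by $\|\Delta\eta\|_{L^4(0,T;L^2)}^2$ (finite thanks to the $\|\Delta\psi\|^4$ regularity). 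This is the origin of the term $\|\Delta(\psi-\lambda^h)\|^2_{L^4(0,T;L^2)}$ in \eqref{eqn:SemiConvergence}. Collecting everything gives $\tfrac{d}{dt}\|\nabla\phi^h\|^2+Re^{-1}\|\Delta\phi^h\|^2\le C\,g(t)\,\|\nabla\phi^h\|^2+(\text{approximation terms})$ with $\int_0^T g<\infty$; integrating over $(0,t)$ and applying the integral Gronwall inequality bounds $\|\nabla\phi^h(T)\|^2+Re^{-1}\int_0^T\|\Delta\phi^h\|^2\,dt$ by $\|\nabla\phi^h(0)\|^2$ plus the approximation integrals, with a constant independent of $h$.

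Finally I would recover the full error by the triangle inequality, $\|\nabla(\psi-\psi^h)\|\le\|\nabla\eta\|+\|\nabla\phi^h\|$ and likewise for the $\Delta$-term in $L^2(0,T)$, which turns the bound on $\phi^h$ into \eqref{eqn:SemiConvergence}; the endpoint contributions $\|\nabla(\psi-\lambda^h)(T)\|^2$ and $\|\nabla(\psi-\lambda^h)(0)\|$ enter through evaluating $\eta$ at $t=T$ and through $\|\nabla\phi^h(0)\|\le\|\nabla(\psi-\psi^h)(0)\|+\|\nabla(\psi-\lambda^h)(0)\|$. Since $\lambda^h$ was arbitrary, taking the infimum over $\lambda^h$ yields the stated best-approximation form. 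I expect the main obstacle to be exactly the handling of $b(\phi^h;\psi,\phi^h)$ and the convective cross terms: the whole Gronwall step closes only if the Young exponents and the choice of \autoref{lma:BH1Bound} bounds are arranged so that just a controllable fraction of $\|\Delta\phi^h\|^2$ is generated while every residual coefficient remains integrable in time, and this is what ties the argument to the $L^4$-in-time regularity of $\Delta\psi$.
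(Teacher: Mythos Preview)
Your plan matches the paper's proof almost step for step: the same $\eta$--$\phi^h$ splitting, the same cancellation of $b(\psi^h;\phi^h,\phi^h)$ and $(\phi^h_x,\phi^h)$, the same Young-with-$p=4/3$ treatment of the self-term $b(\phi^h;\psi,\phi^h)$ producing the $\|\Delta\psi\|^4\|\nabla\phi^h\|^2$ Gronwall coefficient, and the same H\"older-in-time step (using the stability bound on $\psi^h$) that generates the $L^4(0,T;L^2)$ approximation term. The only inessential differences are that the paper applies \eqref{eqn:BH1BoundChi} rather than \eqref{eqn:BH1BoundPsi} to the self-term (both yield $\|\nabla\phi^h\|^{1/2}\|\Delta\phi^h\|^{3/2}\|\Delta\psi\|$), uses the plain bound \eqref{eqn:BH2Bounds} for $b(\eta;\psi,\phi^h)$, and writes the Gronwall step explicitly via an integrating factor.
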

\begin{proof}
  Let $\chi = \chi^h \in X^h$ and subtract \eqref{eqn:SemiDiscretization} from
  \eqref{eqn:QGEStrong}. Denoting $e:=\psi - \psi^h$, we obtain
  \begin{equation}
    \label{eq:errorInitial}
    (\nabla e_t, \nabla \chi^h) + \left[b(\psi;\psi,\chi^h) -
    b(\psi^h;\psi^h,\chi^h)\right]
    + Re^{-1}(\Delta e, \Delta \chi^h) - Ro^{-1} (e_x, \chi^h) = 0\quad
    \forall \chi^h \in X^h \subset X.
  \end{equation}
  Now adding and subtracting $b(\psi^h;\psi,\chi^h)$ in \eqref{eq:errorInitial}
  gives
  \begin{equation}
    \label{eq:errorPutTakeB}
    (\nabla e_t, \nabla \chi^h) + \left[b(e;\psi,\chi^h) +
    b(\psi^h;e,\chi^h)\right]
      + Re^{-1}(\Delta e, \Delta \chi^h) - Ro^{-1} (e_x, \chi^h) = 0\quad
      \forall \chi^h \in X^h \subset X.
  \end{equation}
  Taking $\lambda^h:[0,T] \to X^h$ arbitrary and decomposing the error in
  \eqref{eq:errorPutTakeB} as $e = \eta - \Phi^h$, where $\eta := \psi -
  \lambda^h$ and $\Phi^h := \psi^h - \lambda^h$, results in
  \begin{equation}
    \label{eqn:ErrorDecomposition}
    \begin{aligned}
      (\nabla \Phi^h_t, \nabla \chi^h) + Re^{-1}(\Delta \Phi^h, \Delta \chi^h)
        & = (\nabla \eta_t, \nabla \chi^h) + Re^{-1}(\Delta \eta, \Delta \chi^h)
        - Ro^{-1} \left[(\eta_x, \chi^h) - (\Phi^h_x, \chi^h)\right]          \\
      & + \left[ b(\eta;\psi,\chi^h) - b(\Phi^h;\psi,\chi^h)
        + b(\psi^h;\eta,\chi^h) - b(\psi^h;\Phi^h,\chi^h)\right].
    \end{aligned}
  \end{equation}
  Let $\chi^h = \Phi^h$ in \eqref{eqn:ErrorDecomposition}. Noting that
  $b(\psi^h;\Phi^h,\Phi^h) = 0$ and $(\Phi^h_x,\Phi^h) = 0$ (see Remark 1 in
  \cite{Foster}), we get
  \begin{equation}
    \begin{aligned}
      \frac{1}{2} \frac{d}{dt} \|\nabla \Phi^h\|^2 + Re^{-1}\|\Delta \Phi^h\|^2
         = (\nabla \eta_t, \nabla \Phi^h) &+ Re^{-1}(\Delta \eta, \Delta \Phi^h)
        - Ro^{-1} (\eta_x, \Phi^h)                                            \\
      & + \left[ b(\eta;\psi,\Phi^h) - b(\Phi^h;\psi,\Phi^h)
        + b(\psi^h;\eta,\Phi^h)\right].
    \end{aligned}
  \end{equation}
  Using definition 19 in \cite{Layton08}, the Cauchy-Schwarz inequality,
  \eqref{eqn:pf-2nd-derivative}, and \eqref{eqn:a3Cont} from
  \autoref{lma:ContinuousForms} we have
  \begin{equation}
    \begin{split}
      \frac{1}{2} \frac{d}{dt} \|\nabla \Phi^h\|^2 + Re^{-1}\|\Delta \Phi^h\|^2
        &\le \Gamma_0 \|\nabla \eta_t\| \|\Delta \Phi^h\|
        + Re^{-1}\|\Delta \eta\|\, \|\Delta \Phi^h\|
        + Ro^{-1} \Gamma_2 \|\Delta \eta\| \|\Delta \Phi^h\|                  \\
      & + \left[ b(\eta;\psi,\Phi^h) - b(\Phi^h;\psi,\Phi^h)
        + b(\psi^h;\eta,\Phi^h)\right].
    \end{split}
    \label{eqn:HolderError}
  \end{equation}
  Using the Young inequality with some $\epsilon > 0$ and estimate
  \eqref{eqn:BH2Bounds} from \autoref{lma:ContinuousForms}, we get
  \begin{align}
      \Gamma_0 \|\nabla \eta_t\| \|\Delta \Phi^h\|
        &\leq \dfrac{\epsilon}{2} \|\Delta \Phi^h\|^2
        + \frac{\Gamma_0^2}{2 \epsilon} \|\nabla \eta_t\|^2
        \label{eqn:YoungT}                                                    \\
    Re^{-1} \|\Delta \eta\| \|\Delta \Phi^h\|
      &\le \frac{\epsilon}{2} \|\Delta \Phi^h\|^2
      + \frac{Re^{-2}}{2 \epsilon} \|\Delta \eta\|^2 \label{eqn:YoungLaplace} \\
    Ro^{-1} \Gamma_2 \|\Delta \eta\| \|\Delta \Phi^h\|
      &\le \frac{\epsilon}{2} \|\Delta \Phi^h\|^2
      + \frac{Ro^{-2} \Gamma_2^2}{2 \epsilon} \|\Delta \eta\|^2.
      \label{eqn:YoungBeta}
  \end{align}
  Using the Young inequality with $\varepsilon > 0$ and
  estimate \eqref{eqn:BH2Bounds} in \autoref{lma:ContinuousForms} yields
  \begin{equation}
    b(\eta;\psi,\Phi^h)
    \le \Gamma_1 \|\Delta \eta\|\,\|\Delta \psi\|\, \|\Delta \Phi^h\|
    \le \frac{\varepsilon}{2} \|\Delta \Phi^h\|^2
      + \frac{\Gamma_1^2}{2 \varepsilon} \|\Delta \eta\|^2 \|\Delta \psi\|^2.
    \label{eqn:YoungTrilinear}
  \end{equation}
  Substituting $\varepsilon = 2 \epsilon$ in \eqref{eqn:YoungTrilinear} we
  obtain
  \begin{equation}
    b(\eta; \psi, \Phi^h) \le \epsilon \|\Delta \Phi^h\|^2
      + \frac{\Gamma_1^2}{4 \epsilon} \|\Delta \eta\|^2 \|\Delta \psi\|^2.
      \label{eqn:YoungBH2}
  \end{equation}
  Using \eqref{eqn:YoungT} - \eqref{eqn:YoungBH2} in \eqref{eqn:HolderError}
  we obtain
  \begin{equation}
    \begin{split}
    \frac{1}{2} \frac{d}{dt} \|\nabla \Phi^h\|^2 + \frac{1}{2}\left(2Re^{-1} -
      5 \epsilon \right)\|\Delta \Phi^h\|^2
      &\le \frac{1}{2 \epsilon}\left[\Gamma_0^2 \|\nabla \eta_t\|^2
      + \left( Re^{-2} + Ro^{-2} \Gamma_2^2 \right) \|\Delta \eta\|^2\right]  \\
     & + \frac{\Gamma_1^2}{4 \epsilon}\|\Delta \eta\|^2 \|\Delta \psi\|^2  -
     \left[b(\Phi^h;\psi,\Phi^h) - b(\psi^h;\eta,\Phi^h)\right].
    \end{split}
    \label{eqn:B1Inequality}
  \end{equation}
  For the term $b(\Phi^h; \psi, \Phi^h)$ we use \autoref{lma:BH1Bound} and the
  following version of the Young inequality (equation (1.1.4) in \cite{Layton08}):
  given $a,\,b>0$, for any $\epsilon > 0$ and pair \(p, q\) satisfying
  \begin{equation*}
    1\le p, q \le \infty, \quad \frac{1}{p} + \frac{1}{q} = 1
  \end{equation*}
  it holds that
  \begin{equation}
    \label{eqn:GeneralizedYoung}
    ab \le \epsilon\, a^p
    + \dfrac{\left(p\,\epsilon\right)^{-\nicefrac{q}{p}}}{q} b^q.
  \end{equation}
  Picking $p=\nicefrac{4}{3}$ and $q = 4$ in \eqref{eqn:GeneralizedYoung}, we
  obtain
  \begin{equation}
    \label{eqn:EpsYoungH1}
    |b(\Phi^h; \psi, \Phi^h)| \le \Gamma_3\, \|\Delta \Phi^h\|^{\nicefrac{3}{2}}
      \left(\|\Delta \psi\| \|\nabla \Phi^h\|^{\nicefrac{1}{2}}\right)
    \le \epsilon \|\Delta \Phi^h\|^2 + C^*_1(\Gamma_3,\epsilon) \|\Delta \psi\|^4
      \|\nabla \Phi^h\|^2,
  \end{equation}
  where $C^*_1(\Gamma_3,\epsilon) = \nicefrac{27}{256}\,\Gamma_3^4\,\epsilon^{-3}$.
  Combining \eqref{eqn:B1Inequality} and \eqref{eqn:EpsYoungH1} yields
  \begin{equation}
    \begin{split}
      \frac{1}{2} \frac{d}{dt} \|\nabla \Phi^h\|^2 + \frac{1}{2}\left(2Re^{-1} -
        7 \epsilon \right)
        &\|\Delta \Phi^h\|^2 \le
        \frac{1}{2 \epsilon}\left[\Gamma_0^2 \|\nabla \eta_t\|^2
        + \left( Re^{-2} + Ro^{-2} \Gamma_2^2 \right) \|\Delta \eta\|^2\right]\\
      & + \frac{\Gamma_1^2}{4\epsilon}\|\Delta \eta\|^2 \|\Delta \psi\|^2
        + C^*_1(\Gamma_3,\epsilon) \|\Delta \psi\|^4 \|\nabla \Phi^h\|^2
        + b(\psi^h;\eta,\Phi^h).
    \end{split}
    \label{eqn:B2Inequality}
  \end{equation}
  For the final term $b(\psi^h; \eta, \Phi^h)$, we use inequality
  \eqref{eqn:BH1BoundPsi} and the Young inequality with $\varepsilon = 2
  \epsilon$, \ie
  \begin{equation}
    \label{eqn:YoungPhih}
    b(\psi^h; \eta, \Phi^h) \le \Gamma_4\left(\|\nabla \psi^h\|^{\nicefrac{1}{2}}
      \|\Delta \psi^h\|^{\nicefrac{1}{2}}\right) \|\Delta \eta\|\,
      \|\Delta \Phi^h\|
    \le \epsilon \|\Delta \Phi^h\|^2 + \frac{\Gamma_4^2}{4\epsilon}
      \|\nabla \psi^h\|\, \|\Delta \psi^h\|\, \|\Delta \eta\|^2.
  \end{equation}
  By stability estimate \eqref{eqn:Stability} in \autoref{prop:Stability}, we have
  \begin{equation}
    \|\nabla \psi^h\| \le C^*_2(F,\psi_0, Re, Ro).
    \label{eqn:StabilityBoundPsih}
  \end{equation}
  Using \eqref{eqn:StabilityBoundPsih}, estimate \eqref{eqn:YoungPhih} becomes
  \begin{equation}
    b(\psi^h; \eta, \Phi^h) \le \epsilon \|\Delta \Phi^h\|^2 +
      \frac{\Gamma_4^2}{4\epsilon} C^*_2(F,\psi_0,Re,Ro) \|\Delta \psi^h\|\,
      \|\Delta \eta\|^2.
    \label{eqn:bPsihbound}
  \end{equation}
  Combining \eqref{eqn:B2Inequality} and \eqref{eqn:bPsihbound} gives
  \begin{equation}
    \begin{split}
      \frac{1}{2} \frac{d}{dt} \|\nabla \Phi^h\|^2 + &\frac{1}{2}\left(2Re^{-1} -
        9 \epsilon \right)
        \|\Delta \Phi^h\|^2 \le \frac{1}{2 \epsilon}\left[\Gamma_0^2 \|\nabla \eta_t\|^2
        + \left( Re^{-2} + Ro^{-2} \Gamma_2^2 \right) \|\Delta \eta\|^2\right]\\
      & + \frac{\Gamma_1^2}{4 \epsilon} \|\Delta \psi\|^2 \|\Delta \eta\|^2
        + \frac{\Gamma_4}{4\epsilon}C^*_2(F,\psi_0,Re,Ro) \|\Delta \psi^h\|\,
        \|\Delta \eta\|^2 + C^*_1(\Gamma_3,\epsilon) \|\Delta \psi\|^4 \|\nabla \Phi^h\|^2.
    \end{split}
    \label{eqn:B3Inequality}
  \end{equation}
  Take $\epsilon = \nicefrac{Re^{-1}}{9}$ in \eqref{eqn:B3Inequality}.
  Letting $C^*_0(\Gamma_0) = \Gamma_0^2$,
  $C^*_3(F,\psi_0,Re,Ro,\Gamma_4) = \dfrac{\Gamma_4}{2}\, C^*_2(F,\psi_0,Re,Ro)$,
  $C^*_4(Re) = \frac{9}{2} Re$, $C^*_5(Re,\Gamma_3)=\frac{27}{256}\,9^3\,Re^{3}
  \Gamma_3^4$,
  $C^*_6(Re,Ro,\Gamma_2) = Re^{-2} + Ro^{-2}\Gamma_2^2$, and $C^*_7(\Gamma_1) =
  \dfrac{\Gamma_1^2}{2}$, \eqref{eqn:B3Inequality} reads
  \begin{equation}
    \begin{split}
      \frac{1}{2} \frac{d}{dt} &\|\nabla \Phi^h\|^2
        + \frac{Re^{-1}}{2} \|\Delta \Phi^h\|^2
        \le C^*_4(Re) \biggl[C^*_0(\Gamma_0) \|\nabla \eta_t\|^2
        + C^*_6(Re, Ro,\Gamma_2) \|\Delta \eta\|^2                            \\
      & + C^*_7(\Gamma_1)\, \|\Delta \psi\|^2 \|\Delta \eta\|^2
        + C^*_3(F,\psi_0,Re,Ro,\Gamma_4) \|\Delta \psi^h\|\,
        \|\Delta \eta\|^2\biggr]
        + C^*_5(Re,\Gamma_3) \|\Delta \psi\|^4 \|\nabla \Phi^h\|^2.
    \end{split}
    \label{eqn:NoEps}
  \end{equation}
  Let $a(t):= 2\,C^*_5(Re,\Gamma_3) \|\Delta \psi\|^4$ and
  \begin{equation}
    A(t) := \int_{0}^{t}\! a(t')\, dt' < \infty.
    \label{eqn:L4Bound}
  \end{equation}
  Multiplying \eqref{eqn:NoEps} by the integrating factor $e^{-A(t)}$, we get
  \begin{align*}
    \biggl\{ \frac{d}{dt}\left[\|\nabla \Phi^h\|^2\right]
      &- 2\, C^*_5(Re,\Gamma_3) \|\Delta \psi\|^4 \|\nabla \Phi^h\|^2\biggr\} e^{-A(t)}
        + Re^{-1} \|\Delta \Phi^h\|^2 e^{-A(t)}                               \\
      & \le 2\, C^*_4(Re) \biggl[C^*_0(\Gamma_0) \|\nabla \eta_t\|^2
        + C^*_6(Re,Ro,\Gamma_2) \|\Delta \eta\|^2 + C^*_7(\Gamma_1)\,
        \|\Delta \psi\|^2 \|\Delta \eta\|^2                                   \\
      & \qquad+ C^*_3(F,\psi_0,Re,Ro,\Gamma_4) \|\Delta \psi^h\|\, \|\Delta
        \eta\|^2\biggr] e^{-A(t)},
  \end{align*}
  which can also be written as
  \begin{align*}
    \biggl\{ e^{-A(t)}\frac{d}{dt}
      & \left[\|\nabla \Phi^h\|^2\right]
      - \frac{d}{dt}\bigl[ A(t)\bigr] e^{-A(t)} \|\nabla \Phi^h\|^2\biggr\}
      + Re^{-1} \|\Delta \Phi^h\|^2 e^{-A(t)}                                 \\
    & \le 2\,C^*_4(Re) \biggl[C^*_0(\Gamma_0) \|\nabla \eta_t\|^2
      + C^*_6(Re,Ro,\Gamma_2) \|\Delta \eta\|^2 + C^*_7(\Gamma_1)\,
      \|\Delta \psi\|^2 \|\Delta \eta\|^2                                     \\
    &\qquad + C^*_3(F,\psi_0,Re,Ro,\Gamma_4) \|\Delta \psi^h\|\, \|\Delta
      \eta\|^2\biggr] e^{-A(t)},
  \end{align*}
  and simplifies to
  \begin{equation}
    \label{eqn:SimplifedIntegratingFactor}
    \begin{aligned}
      \frac{d}{dt}\bigl[e^{-A(t)} &\|\nabla \Phi^h\|^2\bigr]
        + Re^{-1} \|\Delta \Phi^h\|^2 e^{-A(t)}                               \\
      & \le 2\, C^*_4(Re) \biggl[C^*_0(\Gamma_0) \|\nabla \eta_t\|^2
        + C^*_6(Re,Ro,\Gamma_2) \|\Delta \eta\|^2 + C^*_7(\Gamma_1)\,
        \|\Delta \psi\|^2 \|\Delta \eta\|^2                                   \\
      &\qquad + C^*_3(F,\psi_0,Re,Ro,\Gamma_4)\, \|\Delta \psi^h\|\,
        \|\Delta \eta\|^2\biggr] e^{-A(t)}.
    \end{aligned}
  \end{equation}
  Now, integrating \eqref{eqn:SimplifedIntegratingFactor} over $[0,T]$ and
  multiplying by $e^{A(T)}$ gives
  \begin{equation}
    \begin{split}
      \|\nabla \Phi^h(T)\|^2 + Re^{-1} \int_0^T\! &\|\Delta \Phi^h\|^2
        e^{A(T) - A(t)}\, dt \le e^{A(T) - A(0)} \|\nabla \Phi^h(0)\|^2       \\
      & + 2\, C^*_4(Re)\biggl[ \int_0^T\!
          C^*_0(\Gamma_0) \|\nabla \eta_t\|^2
        + C^*_6(Re,Ro,\Gamma_2) \|\Delta \eta\|^2 e^{A(T) - A(t)}\, dt        \\
      & + \int_0^T\!  \left( C^*_7(\Gamma_1)\, \|\Delta \psi\|^2
        +  C^*_3(F,\psi_0,Re,Ro,\Gamma_4)\,\|\Delta \psi^h\|\right)
        \|\Delta \eta\|^2 e^{A(T) - A(t)}\, dt\biggr].
    \end{split}
    \label{eqn:IntegratedInequality}
  \end{equation}
  Noting that $e^{A(T) - A(t)} \ge 1$, $e^{A(T) - A(t)} \le e^{A(T)}$, and
  $A(0) = 0$, \eqref{eqn:IntegratedInequality} implies
  \begin{equation}
    \begin{split}
      \|\nabla \Phi^h(T)\|^2 + Re^{-1} \int_0^T\! \|\Delta \Phi^h\|^2 &\, dt
        \le C^*_8(T,Re,\Gamma_3) \|\nabla \Phi^h(0)\|^2                       \\
      & + C^*_9(T,Re,\Gamma_3)\biggl[ \int_0^T\!
          C^*_0(\Gamma_0) \|\nabla \eta_t\|^2
        + C^*_6(Re,Ro,\Gamma_2) \|\Delta \eta\|^2\, dt                        \\
      & + \int_0^T\!  \left(C^*_7(\Gamma_1)\, \|\Delta \psi\|^2
        + C^*_3(F,\psi_0,Re,Ro,\Gamma_4)\,\|\Delta \psi^h\|\right)
        \|\Delta \eta\|^2\, dt\biggr],
    \end{split} \label{eqn:CTREInequality}
  \end{equation}
  where
  \begin{align}
    C^*_8(T,Re,\Gamma_3) &= \exp\!\left(2\,\dfrac{27}{256}\, 9^3\, Re^3\,
      \Gamma_3^4\, \int_{0}^{T}\!\|\Delta \psi\|^4\, dt\right), \label{eqn:C1TRe}\\
    C^*_9(T,Re,\Gamma_3) &= 9 Re\, \exp\!\left(2\,\dfrac{27}{256}\, 9^3\, Re^3\,
      \Gamma_3^4\, \int_{0}^{T}\!\|\Delta \psi\|^4\, dt\right). \label{eqn:C2TRe}
  \end{align}
  By the Cauchy-Schwarz inequality we have
  \begin{align}
    \int_0^T\! \|\Delta \psi^h\| \|\Delta \eta\|^2\, dt &\le
      \|\Delta \psi^h\|_{L^2(0,T;L^2)} \|\Delta \eta\|^2_{L^4(0,T;L^2)},
    \label{eqn:HolderPsih}                                                    \\
    \int_0^T\! \|\Delta \psi\|^2 \|\Delta \eta\|^2\, dt &\le
      \|\Delta \psi\|^2_{L^4(0,T;L^2)} \|\Delta \eta\|^2_{L^4(0,T;L^2)}.
    \label{eqn:HolderPsi}
  \end{align}
  Note that $\|\Delta \psi^h\|_{L^2(0,T;L^2)}\le C^*_{10}(Re,Ro,F,\psi_0)$ from
  the stability bound \eqref{eqn:Stability} and (by hypothesis) $\|\Delta
  \psi\|_{L^4(0,T;L^2)} \le C^*_{11}$. Thus, \eqref{eqn:CTREInequality} can be
  written as
  \begin{equation}
    \label{eqn:ApplyL4StabilityBound}
    \begin{aligned}
      \|\nabla \Phi^h(T)\|^2 &+ Re^{-1} \int_0^T\! \|\Delta \Phi^h\|^2\, dt
        \le C^*_8(T,Re,\Gamma_3) \|\nabla \Phi^h(0)\|^2                       \\
      & + C^*_9(T,Re,\Gamma_3)\biggl[ \int_0^T\!
          C^*_0(\Gamma_0) \|\nabla \eta_t\|^2
        + C^*_6(Re,Ro,\Gamma_2) \|\Delta \eta\|^2\, dt                        \\
      & + \bigg(C^*_7(\Gamma_1)\,C^*_{11}
        + C^*_3(F,\psi_0,Re,Ro,\Gamma_4)\,C^*_{10}(Re,Ro,F,\psi_0)\bigg)
        \|\Delta \eta\|^2_{L^4(0,T;L^2)}\biggr].
    \end{aligned}
  \end{equation}
  \begin{remark}
    We note that the stability bound in \autoref{prop:Stability} does not
    provide an estimate for $\|\Delta \psi^h\|_{L^4(0,T;L^2)}$, and thus was the
    reasoning for treating the nonlinear terms $b(\eta;\psi,\Phi^h)$ and
    $b(\psi^h;\eta,\Phi^h)$ in \eqref{eqn:HolderError} differently.
  \end{remark}
  {\setlength{\parindent}{0cm}
  Adding $\|\nabla \eta(T)\|^2 + Re^{-1} \int_0^T\! \|\Delta \eta\|^2\, dt$
  to both sides of \eqref{eqn:ApplyL4StabilityBound} and using the triangle
  inequality gives}
  \begin{equation}
    \begin{split}
      \frac{1}{2} \|\nabla ( \psi &- \psi^h)(T) \|^2
      + \frac{Re^{-1}}{2} \int_0^T\! \|\Delta \left(\psi - \psi^h\right)\|^2\, dt
        \le C^*_8(T,Re,\Gamma_3) \|\nabla \Phi^h(0)\|^2                       \\
      & + C^*_9(T,Re,\Gamma_3) \int_0^T\!
        C^*_0(\Gamma_0) \|\nabla \left(\psi - \lambda^h\right)_t\|^2
      + (Re^{-1 + } C^*_6(Re,Ro,\Gamma_2))
      \|\Delta \left(\psi - \lambda^h\right)\|^2\, dt                         \\
      & + \bigl[C^*_7(\Gamma_1)\, C^*_{11}                                    \\
      & + C^*_3(F,\psi_0,Re,Ro,\Gamma_4)\,C^*_{10}(Re,Ro,F, \psi_0)\bigr]
       \|\Delta \left(\psi - \lambda^h\right)\|^2_{L^4(0,T;L^2)}
        + \|\nabla \left(\psi - \lambda^h\right)(T)\|^2.
    \end{split}
    \label{eqn:TriangleIneq1}
  \end{equation}
  Since \(\|\Phi^h(0)\| \leq \|e(0)\| + \|\eta(0)\|\), inequality
  \eqref{eqn:TriangleIneq1} yields
  \begin{equation}
    \begin{split}
      \frac{1}{2} \|\nabla ( \psi &- \psi^h)(T) \|^2
      + \frac{Re^{-1}}{2} \int_0^T\! \|\Delta \left(\psi - \psi^h\right)\|^2\, dt
        \le \,C^*_8(T,Re,\Gamma_3) \bigg(\|\nabla e(0)\|^2 + \|\nabla (\psi -
        \lambda^h)(0)\|^2\bigg)                                               \\
      & + C^*_9(T,Re,\Gamma_3) \int_0^T\!
          C^*_0(\Gamma_0) \|\nabla \left( \psi - \lambda^h\right)_t\|^2
        + \bigl(Re^{-1} + C^*_6(Re,Ro,\Gamma_2)\bigr)
        \|\Delta \left(\psi - \lambda^h\right)\|^2\, dt                       \\
      & + \bigl[C^*_7(\Gamma_1)\, C^*_{11}
        + C^*_3(F,\psi_0,Re,Ro,\Gamma_4)\,C^*_{10}(Re,Ro,F, \psi_0)\bigr]
        \|\Delta \left(\psi - \lambda^h\right)\|^2_{L^4(0,T;L^2)}             \\
      & + \|\nabla \left(\psi - \lambda^h\right)(T)\|^2.
    \end{split}
    \label{eqn:TriangleIneq2}
  \end{equation}
  Finally, taking $\inf_{\lambda^h : [0, T] \to X^h}$ of both sides of
  \eqref{eqn:TriangleIneq2} and letting
  \begin{align*}
    C = \max\biggl\{&2 C^*_8(T,Re,\Gamma_3), 2 C^*_9(T,Re,\Gamma_3) \max\{1,
    Re^{-1} + C^*_6(Re, Ro, \Gamma_2)\},                                      \\
      & 2 [C^*_7(\Gamma_1)\, C^*_{11}
      + C^*_3(F,\psi_0,Re,Ro,\Gamma_4) C^*_{10}(F, Re, Ro, \psi_0)],
      2 \biggr\}
  \end{align*}
  gives
  \begin{equation*}
    \begin{aligned}
      \|\nabla \big(\psi &- \psi^h\big)(T) \|^2
        + Re^{-1} \int_0^T\! \|\Delta \left(\psi - \psi^h\right)\|^2\, dt
        \le C\,\biggl\{\bigl\|\nabla\left[\psi - \psi^h\right](0)\bigr\|^2    \\
          &+ \inf_{\lambda^h : [0,T] \to  X^h} \biggl[
      \bigl\|\nabla\left[\psi - \lambda^h\right](0)\bigr\|^2
       + \int_0^T\!
         \left\|\nabla \left(\psi - \lambda^h\right)_t\right\|^2
       + \left\|\Delta \left(\psi - \lambda^h\right)\right\|^2\, dt           \\
      &+ \left\|\Delta \left(\psi - \lambda^h\right)\right\|^2_{L^4(0,T;L^2)}
       + \|\nabla \left(\psi - \lambda^h\right)(T)\|^2\biggr]\biggr\},
    \end{aligned}
  \end{equation*}
  which is the desired result.
\end{proof}

Next we determine the FE convergence rates yielded by the error estimate
\eqref{eqn:SemiConvergence} in \autoref{thm:SemiConvergence} for the Argyris
element. To this end, in the remainder of this section we let \(X^h \subseteq
X\) denote the FE space associated with the Argyris element. Furthermore, we
assume the nodes of the FE mesh do not move. Finally, let \(I^h\) be the
\(\mathbb{P}^5\)-interpolation operator associated with the Argyris element (see
Theorem 6.1.1 in \cite{Ciarlet}). The following two lemmas will be used in
\autoref{thm:SemiInterp} to determine the FE convergence rates for the Argyris
element.
\begin{lemma} \label{lma:Interpolation}
  Assuming that $\psi, \psi_t \in H^6$, we have that
  \begin{align}
    \frac{\partial}{\partial t}\left( I^h \psi \right) &=
      I^h \left(\frac{\partial \psi}{\partial t}\right), \text{ and }
      \label{eqn:dInterp}                                                     \\
    \left\|\frac{\partial \left[\nabla \psi\right]}{\partial t}
      - \frac{\partial}{\partial t} \nabla \left(I^h \psi\right)\right\|
      &\le C\, h^5\, \left|\frac{\partial \psi}{\partial t}\right|_6.
      \label{eqn:H1InterpolationError}
  \end{align}
  \begin{remark} \label{rmk:H6imbedC1}
    Estimate (32) in Theorem 6 in Section 5.6 of \cite{Evans2010}
    shows that $H^6 \hookrightarrow C^1$. Thus, the interpolation operator $I^h$
    can be applied to $\psi$ and $\psi_t$.
  \end{remark}
\end{lemma}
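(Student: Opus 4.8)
The plan is to treat the two assertions in sequence, letting the commutation identity \eqref{eqn:dInterp} carry the structural content, after which the gradient estimate \eqref{eqn:H1InterpolationError} collapses to a standard polynomial interpolation bound. The starting observation is that the Argyris interpolant is \emph{linear} in its argument and is assembled from \emph{spatial} data only: writing $I^h \psi = \sum_i \ell_i(\psi)\,\phi_i$, the nodal basis functions $\phi_i$ depend on the spatial variable alone, and the degrees of freedom $\ell_i$ are the point values of $\psi$ and of its first- and second-order spatial derivatives at the triangle vertices together with the normal derivatives at the edge midpoints.

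First I would prove \eqref{eqn:dInterp}. Because the mesh nodes do not move (by the standing assumption preceding the lemma), the $\phi_i$ are independent of $t$, so differentiating the representation in time gives $\partial_t(I^h\psi) = \sum_i \partial_t[\ell_i(\psi)]\,\phi_i$. It then remains to verify $\partial_t[\ell_i(\psi)] = \ell_i(\partial_t\psi)$ for each $i$, i.e.\ that the time derivative may be exchanged with the pointwise spatial derivatives $\partial^\alpha$, $|\alpha|\le 2$, entering the Argyris functionals. This is exactly the symmetry of mixed partials, valid once those derivatives are continuous. By \autoref{rmk:H6imbedC1}, $\psi$ and $\psi_t$ lie in $H^6$, whose Sobolev embedding renders the relevant low-order derivatives continuous, so the interchange is licit and summing over $i$ yields $\partial_t(I^h\psi) = I^h(\partial_t\psi)$.

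Next I would deduce \eqref{eqn:H1InterpolationError}. Since the spatial gradient $\nabla$ and the time derivative $\partial_t$ act on independent variables they commute, so $\partial_t \nabla(I^h\psi) = \nabla \partial_t(I^h\psi) = \nabla I^h(\partial_t\psi)$ by \eqref{eqn:dInterp}, while $\partial_t\nabla\psi = \nabla\partial_t\psi$. Hence the left-hand side of \eqref{eqn:H1InterpolationError} equals $\|\nabla(\partial_t\psi - I^h(\partial_t\psi))\| = |\partial_t\psi - I^h(\partial_t\psi)|_1$. Applying the classical Argyris ($\mathbb{P}^5$) interpolation estimate from Theorem 6.1.1 in \cite{Ciarlet}, namely $|w - I^h w|_m \le C\,h^{6-m}\,|w|_6$ for $w \in H^6$ and $m=0,1,2$, with $m=1$ and $w=\partial_t\psi$ gives $\|\nabla(\partial_t\psi - I^h(\partial_t\psi))\| \le C\,h^5\,|\partial_t\psi|_6$, which is the claim.

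The hard part, and really the only step needing genuine care, is the interchange $\partial_t[\ell_i(\psi)] = \ell_i(\partial_t\psi)$: one must confirm that the hypothesis $\psi,\psi_t \in H^6$ is strong enough, through Sobolev embedding, both to make the pointwise derivatives in the Argyris degrees of freedom well defined and to justify swapping the order of the temporal and spatial derivatives. Once the fixed-mesh hypothesis has reduced matters to this pointwise interchange, everything else is bookkeeping together with the off-the-shelf interpolation bound.
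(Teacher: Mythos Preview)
Your proposal is correct and follows essentially the same approach as the paper: the paper's proof simply states that \eqref{eqn:dInterp} ``follows from the explicit formulas for the $\mathbb{P}^5$ interpolant'' (which is precisely your expansion $I^h\psi=\sum_i \ell_i(\psi)\phi_i$ with time-independent $\phi_i$ and spatial functionals $\ell_i$), and that \eqref{eqn:H1InterpolationError} follows by combining \eqref{eqn:dInterp} with estimate (6.1.9) in Theorem~6.1.1 of \cite{Ciarlet} with $p=q=2$, $m=1$, exactly as you do. Your write-up is more detailed than the paper's two-line proof, but the underlying argument is the same.
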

\begin{proof}
  Estimate \eqref{eqn:dInterp} follows from the explicit formulas for the
  $\mathbb{P}^5$ interpolant, $I^h$ (see \cite{Ciarlet}). Estimate
  \eqref{eqn:H1InterpolationError} follows from a combination of
  \eqref{eqn:dInterp} and
  estimate (6.1.9) in Theorem 6.1.1 from \cite{Ciarlet} with $p=q=2$ and $m=1$.
\end{proof}

\begin{lemma} \label{lma:IntegralErrors}
  Suppose that $\psi, \psi_t \in H^6(\Omega)$. Then
  \begin{equation}
    \int_{0}^{T}\! \|\nabla \left(\psi - I^h \psi\right)_t\|^2 + \|\Delta
    \left(\psi - I^h \psi\right) \|^2\, dt \le C\, h^8
    \int_{0}^{T}\! h^2 \,|\psi_t|_6^2 + |\psi|_6^2\, dt
    \label{eqn:IntegralErrors}
  \end{equation}
  and
  \begin{equation}
    \|\Delta\left(\psi- I^h \psi\right)\|_{L^4(0,T; L^2(\Omega))}^2 \le C h^8
      |\psi|_{L^4(0,T;H^6(\Omega))}^2. \label{eqn:L4Interpolation}
  \end{equation}
\end{lemma}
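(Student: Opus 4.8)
The plan is to reduce both estimates to pointwise-in-time interpolation bounds and then integrate in $t$. The two ingredients are \autoref{lma:Interpolation} (which controls the time-derivative term and supplies the commutation $(I^h\psi)_t = I^h\psi_t$ from \eqref{eqn:dInterp}) and the standard $\mathbb{P}^5$ interpolation estimate of Ciarlet (Theorem 6.1.1, inequality (6.1.9) in \cite{Ciarlet}), which for the Argyris element gives $|\psi - I^h\psi|_m \le C\,h^{6-m}\,|\psi|_6$ for $m = 1, 2$. Since $|v|_2 = \|\Delta v\|$ on $X$, the case $m = 2$ supplies the fixed-time spatial bound $\|\Delta(\psi - I^h\psi)\| \le C\,h^4\,|\psi|_6$.

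To prove \eqref{eqn:IntegralErrors}, I would bound the integrand pointwise in $t$. The first term is exactly what \eqref{eqn:H1InterpolationError} estimates: by the commutation \eqref{eqn:dInterp} its left-hand side equals $\|\nabla(\psi - I^h\psi)_t\|$, so $\|\nabla(\psi - I^h\psi)_t\|^2 \le C\,h^{10}\,|\psi_t|_6^2$. The second term is the squared $m = 2$ Ciarlet bound, $\|\Delta(\psi - I^h\psi)\|^2 \le C\,h^8\,|\psi|_6^2$. Adding these and factoring out $h^8$ produces the integrand bound $C\,h^8\bigl(h^2\,|\psi_t|_6^2 + |\psi|_6^2\bigr)$; integrating over $[0,T]$ then gives \eqref{eqn:IntegralErrors}.

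For \eqref{eqn:L4Interpolation} I would track the Bochner-norm exponents with care, since here the factor $h^8$ comes from an $L^4$-in-time norm. By definition $\|\Delta(\psi - I^h\psi)\|_{L^4(0,T;L^2)}^2 = \bigl(\int_0^T \|\Delta(\psi - I^h\psi)\|^4\,dt\bigr)^{1/2}$. Raising the fixed-time bound $\|\Delta(\psi - I^h\psi)\| \le C\,h^4\,|\psi|_6$ to the fourth power gives $\|\Delta(\psi - I^h\psi)\|^4 \le C\,h^{16}\,|\psi|_6^4$; integrating in $t$ and taking the square root yields $C\,h^8\bigl(\int_0^T |\psi|_6^4\,dt\bigr)^{1/2} = C\,h^8\,|\psi|_{L^4(0,T;H^6)}^2$, which is \eqref{eqn:L4Interpolation}.

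The argument is essentially routine, so there is no serious obstacle; the only points needing attention are the exponent bookkeeping in \eqref{eqn:L4Interpolation} (where squaring the Bochner norm turns the fourth-power time integral into a square root, so the fixed-time factor $h^4$ becomes $h^8$) and the identification, via the commutation \eqref{eqn:dInterp}, of $\nabla(\psi - I^h\psi)_t$ with the quantity bounded in \eqref{eqn:H1InterpolationError}. Both are furnished by \autoref{lma:Interpolation}, so the two estimates follow at once from the fixed-time interpolation bounds.
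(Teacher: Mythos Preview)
Your proposal is correct and follows essentially the same route as the paper: apply the pointwise-in-time Ciarlet interpolation bound $\|\Delta(\psi - I^h\psi)\| \le C\,h^4|\psi|_6$ together with \eqref{eqn:H1InterpolationError} from \autoref{lma:Interpolation}, then square (respectively, raise to the fourth power) and integrate in $t$. The paper's proof is terser but identical in substance; your explicit exponent bookkeeping for the $L^4$-in-time norm is exactly what is intended.
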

\begin{proof}
  At each time instance we see from inequality (6.1.9) in \cite{Ciarlet} that
  $\|\Delta\left(\psi - I^h \psi\right)\| \le C\, h^4\, |\psi|_6$. Squaring and
  integrating this and using the interpolation error bound
  \eqref{eqn:H1InterpolationError} from \autoref{lma:Interpolation} gives the
  first estimate. The second estimate follows analogously, \ie
  \begin{equation}
    \|\Delta \left(\psi - I^h \psi\right)\|_{L^4(0,T;L^2(\Omega))}
    = \left(\int_{0}^{T}\! \|\Delta\left(\psi - I^h \psi\right)\|^4\, dt
      \right)^{\frac{1}{4}}
    \le C\, h^4 \left(\int_{0}^{T}\! |\psi|_6^4\, dt\right)^{\frac{1}{4}},
  \end{equation}
  which proves \eqref{eqn:L4Interpolation}.
\end{proof}

\begin{thm} \label{thm:SemiInterp}
  Suppose that $\psi, \psi_t \in H^6(\Omega)$. Suppose also the
  assumptions of \autoref{thm:SemiConvergence} hold. Then
  \begin{equation}
    \begin{aligned}
      \|\nabla \big(\psi &- \psi^h\big)(T) \|^2
        + Re^{-1} \int_0^T\! \|\Delta \left(\psi - \psi^h\right)\|^2\, dt     \\
      & \le h^8\, C\, \biggl\{ h^2\, |\psi|_6^2
        + h^2\,\|\psi_t\|_{L^2(0,T;H^6(\Omega))}^2
        + \|\psi\|_{L^2(0,T;H^6(\Omega))}^2
        + \|\psi\|_{L^4(0,T;H^6(\Omega))}^2 \biggr\}.
    \end{aligned}
    \label{eqn:SemiInterp}
  \end{equation}
\end{thm}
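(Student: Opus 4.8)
The plan is to specialize the abstract error bound \eqref{eqn:SemiConvergence} of \autoref{thm:SemiConvergence} to the Argyris interpolant. Since the right-hand side of \eqref{eqn:SemiConvergence} is an infimum over all admissible $\lambda^h : [0,T] \to X^h$, I would simply take $\lambda^h = I^h \psi$. This choice is legitimate because $\psi, \psi_t \in H^6 \hookrightarrow C^1$ by \autoref{rmk:H6imbedC1}, so the $\mathbb{P}^5$-interpolant is well defined at every time $t$; moreover $I^h\psi$ maps into $X^h \subset X = H^2_0(\Omega)$ as required. Bounding the infimum from above by this single choice replaces the abstract quantity with the concrete interpolation error $\psi - I^h\psi$, which is exactly what \autoref{lma:Interpolation} and \autoref{lma:IntegralErrors} were built to estimate.

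First I would dispose of the initial-condition term $\|\nabla(\psi - \psi^h)(0)\|^2$ by taking the discrete initial data to be the interpolant, $\psi_0^h = I^h \psi_0$, so that $\|\nabla(\psi - \psi^h)(0)\| = \|\nabla(\psi_0 - I^h\psi_0)\|$. The pointwise $H^1$ interpolation estimate $\|\nabla(\psi - I^h\psi)\| \le C h^5 |\psi|_6$ (the $m=1$ case of (6.1.9) in \cite{Ciarlet}, the same estimate underlying \eqref{eqn:H1InterpolationError}) then gives an $O(h^{10})$ bound for this term. Applying the identical pointwise estimate at $t=0$ and at $t=T$ likewise controls the two remaining pointwise contributions $\|\nabla(\psi - I^h\psi)(0)\|^2$ and $\|\nabla(\psi - I^h\psi)(T)\|^2$, again at order $h^{10}$; all three feed into the $h^8 \cdot h^2 |\psi|_6^2$ term of the final bound.

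It then remains to handle the three time-integrated terms. The combination $\int_0^T \|\nabla(\psi - I^h\psi)_t\|^2 + \|\Delta(\psi - I^h\psi)\|^2\, dt$ is precisely the left-hand side of \eqref{eqn:IntegralErrors} in \autoref{lma:IntegralErrors}, which yields $C h^8 \int_0^T h^2|\psi_t|_6^2 + |\psi|_6^2\, dt = C h^{10}\|\psi_t\|_{L^2(0,T;H^6)}^2 + C h^8 \|\psi\|_{L^2(0,T;H^6)}^2$; here one uses the commuting relation $\partial_t(I^h\psi) = I^h \psi_t$ from \eqref{eqn:dInterp} to identify $(\psi - I^h\psi)_t$ with $\psi_t - I^h\psi_t$. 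The $L^4$-in-time term $\|\Delta(\psi - I^h\psi)\|_{L^4(0,T;L^2)}^2$ is controlled by \eqref{eqn:L4Interpolation}, producing $C h^8 \|\psi\|_{L^4(0,T;H^6)}^2$. Substituting all of these bounds into \eqref{eqn:SemiConvergence}, factoring out $h^8$ (using $h^{10} = h^8 \cdot h^2$ for the pointwise and $\psi_t$ contributions and replacing seminorms by the full $H^6$ norms), collects exactly the four advertised terms.

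The main obstacle is bookkeeping rather than a new idea: one must keep the distinct powers of $h$ from the $H^1$, $\Delta$ (i.e.\ $H^2$-seminorm), and $L^4$-in-time interpolation estimates separate and verify that $h^8$ is the common dominating power, and one must fix $\psi_0^h = I^h\psi_0$ so the initial error inherits the same rate. All the genuinely analytic work---the error identity, the Gr\"onwall argument with integrating factor, the stability bound, and the interpolation rates themselves---has already been established in \autoref{thm:SemiConvergence} together with Lemmas~\ref{lma:Interpolation} and~\ref{lma:IntegralErrors}, so this theorem is purely an assembly step.
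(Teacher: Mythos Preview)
Your proposal is correct and follows exactly the approach the paper takes: the paper's own proof is the single sentence ``The proof follows from \autoref{thm:SemiConvergence}, \autoref{lma:Interpolation}, and \autoref{lma:IntegralErrors},'' and your write-up is precisely the detailed unpacking of that assembly step, including the (tacit in the paper) choice $\psi_0^h = I^h\psi_0$ needed to make the initial-error term fit the stated rate.
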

\begin{proof}
  The proof follows from \autoref{thm:SemiConvergence},
  \autoref{lma:Interpolation}, and \autoref{lma:IntegralErrors}.
\end{proof}

  \section{Numerical Results} \label{sec:Results}
  In this section we verify the theoretical error estimates developed in
\autoref{sec:Errors}.
As noted in Section 6.1 of \cite{Ciarlet} (see also Section 13.2 in
\cite{Gunzburger89}, Section 3.1 in \cite{Johnson}, and Theorem 5.2 in
\cite{Braess}), in order to develop a conforming FE discretization for the QGE
\eqref{eqn:QGEStrong}, we are faced with the problem of constructing FE
subspaces of $H^2_0(\Omega)$. Since the standard, piecewise polynomial FE spaces
are locally regular, this construction amounts in practice to finding FE spaces
$X^h$ that satisfy the inclusion $X^h \subset C^1({\bar \Omega})$, \ie $C^1$
FEs. Several FEs meet this requirement (\eg Section 6.1 in
\cite{Ciarlet}, Section 13.2 in \cite{Gunzburger89}, and Section 2.5 in
\cite{Braess}): the Argyris triangular element, the Bell triangular element, the
Hsieh-Clough-Tocher triangular element (a macroelement), and the
Bogner-Fox-Schmit rectangular element. In our numerical investigation, we will
use the Argyris triangular element, depicted in
\autoref{fig:Argyris}. Additionally, we note that
\eqref{eqn:SemiDiscretization}-\eqref{eqn:initialConditionProjection} is only a
semi-discretization, since the formulation is still continuous in time, but
discretized in space. For this numerical discretization, we apply the method of
lines in the time domain, \ie we use a finite difference approximation (implicit
Euler scheme) for the time derivative.
\begin{figure}[h]
	\centering
\begin{tikzpicture}[scale=0.5]
	\path[coordinate] (0,0) coordinate(A)
		++(60:3cm) coordinate(D)
		++(60:3cm) coordinate(B)
		++(-60:3cm) coordinate(E)
		++(-60:3cm) coordinate(C)
		++(180:3cm) coordinate(F);
	\draw (A) -- (D) -- (B) -- (E) -- (C) -- (F) -- cycle;
	\filldraw[black] (A) circle(3pt); 
	\filldraw[black] (B) circle(3pt); 
	\filldraw[black] (C) circle(3pt); 
	\draw[black] (A) circle(6pt); 
	\draw[black] (B) circle(6pt); 
	\draw[black] (C) circle(6pt); 
	\draw[black] (A) circle(9pt); 
	\draw[black] (B) circle(9pt); 
	\draw[black] (C) circle(9pt); 
	\draw (1.067,2.848) -- (D) -- (1.933,2.348); 
	\draw (4.067,2.348) -- (E) -- (4.933,2.848); 
	\draw (3,-0.5cm) -- (F) -- (3,0.5cm); 
\end{tikzpicture}
	\caption{Argyris element with its 21 degrees of freedom.}
	\label{fig:Argyris}
\end{figure}
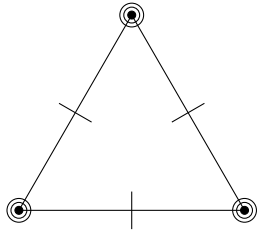

We apply Newton's method to solve the resulting nonlinear system at each time
step.
We test for convergence of the nonlinear solver by examining the $\ell^2$-norm of the
Newton update; when the norm of the update is less than $10^{-8}$, then we
consider the iteration to have converged.

We use $Re = 1$ and $Ro = 1$ in all of the following computational tests. The
variables $k$ and $h$ respectively refer to the time and spatial discretization
stepsizes.

\subsection*{Test 1}
We use an exact solution
\begin{equation}
  \psi(t;x,y) = \left[\sin(\pi x) \sin(\pi y)\right]^2 \sin(t)
  \label{eqn:Test1}
\end{equation}
with spatial domain $\Omega = [0,1]^2$.
This is similar to Test 3 in \cite{Foster}. The considered time interval is
$\left[0,\frac{\pi}{2}\right]$. The forcing term $F$ is derived by the method of
manufactured solutions. The results of this experiment are summarized in
\autoref{tab:Test1Space}, which displays the orders of convergence of the FE
discretization in \(L^2\), \(H^1\), and \(H^2\) norms for differing \(h\). The
results in \autoref{tab:Test1Space} are plotted in \autoref{fig:Test1Space}. Note that
the observed orders of convergence are close to the theoretical error estimates
developed in \autoref{sec:Errors}. The $L^2$ order, however, drops off for the
last spatial discretization due the error per node being near machine precision.

\begin{table}[h!]
    \centering
    \begin{tabular}{|c|c|c|c|c|c|c|c|c|}
      \hline
      $k$ & $h$ & DoFs & $e_{L^2}$ & $L^2$ order & $e_{H^1}$ & $H^1$ order & $e_{H^2}$ & $H^2$ order\\
      \hline
      $\nicefrac{1}{8192}$ & $\nicefrac{1}{2}$ & $38$ & $1.23\times 10^{-2}$ & $-$ & $1.18\times 10^{-1}$ & $-$ & $1.57\times 10^0$ & $-$ \\ [0.2em]
      $\nicefrac{1}{8192}$ & $\nicefrac{1}{4}$ & $174$ & $2.12\times 10^{-5}$ & $9.18$ & $7.31\times 10^{-4}$ & $7.34$ & $2.79\times 10^{-2}$ & $5.81$ \\ [0.2em]
      $\nicefrac{1}{8192}$ & $\nicefrac{1}{8}$ & $662$ & $7.88\times 10^{-7}$ & $4.75$ & $4.59\times 10^{-5}$ & $3.99$ & $3.04\times 10^{-3}$ & $3.20$ \\ [0.2em]
      $\nicefrac{1}{8192}$ & $\nicefrac{1}{16}$ & $2853$ & $7.87\times 10^{-9}$ & $6.65$ & $9.05\times 10^{-7}$ & $5.67$ & $1.29\times 10^{-4}$ & $4.56$ \\ [0.2em]
      $\nicefrac{1}{8192}$ & $\nicefrac{1}{32}$ & $11690$ & $6.97\times 10^{-11}$ & $6.82$ & $1.88\times 10^{-8}$ & $5.59$ & $5.98\times 10^{-6}$ & $4.43$ \\ [0.2em]
      $\nicefrac{1}{8192}$ & $\nicefrac{1}{64}$ & $47958$ & $7.23\times 10^{-12}$ & $3.27$ & $5.26\times 10^{-10}$ & $5.16$ & $3.43\times 10^{-7}$ & $4.12$ \\ [0.2em]
      \hline
    \end{tabular}

  \caption{Test 1: spatial orders of convergence with exact solution
  \eqref{eqn:Test1}.}
  \label{tab:Test1Space}
\end{table}

\begin{figure}
  \centering
    \includegraphics[scale=0.5]{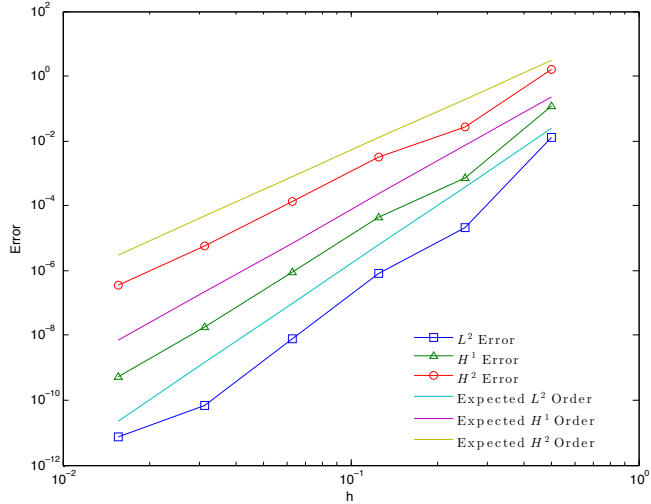}
    \caption{Test 1: orders of convergence in space for the full discretization
    of \eqref{eqn:QGE_psi} with exact solution \eqref{eqn:Test1}.}
  \label{fig:Test1Space}
\end{figure}

\subsection*{Test 2}
For this test we take the exact solution to be
\begin{equation}
  \psi(t;x,y) = \left[\left(1-\frac{x}{3}\right)\left(1-e^{-20\,x}\right)
  \sin(\pi y)\right]^2 \sin(t)
  \label{eqn:Test2}
\end{equation}
with spatial domain $\Omega = [0,3] \times [0,1]$, which corresponds to
Test 6 in \cite{Foster} with a time-dependent term.
The time interval for integration is $[0,0.5]$. A boundary layer will form along
the western edge of the problem domain in this example. Note that the observed
orders of convergence match the theoretical error estimates developed in
\autoref{sec:Errors}. The results in \autoref{tab:Test2Space} are also plotted
in \autoref{fig:Test2Space}.

\begin{table}
  \centering
  \begin{tabular}{|c|c|c|c|c|c|c|c|c|}
    \hline
    $k$ & $h$ & DoFs & $e_{L^2}$ & $L^2$ order & $e_{H^1}$ & $H^1$ order & $e_{H^2}$ & $H^2$ order\\
    \hline
    $\nicefrac{1}{8192}$ & $\nicefrac{1}{2}$ & $38$ & $2.86\times 10^{-2}$ & $-$ & $5.16\times 10^{-1}$ & $-$ & $1.82\times 10^{1}$ & $-$\\
    $\nicefrac{1}{8192}$ & $\nicefrac{1}{4}$ & $174$ & $4.79\times 10^{-3}$ & $2.58$ & $1.75\times 10^{-1}$ & $1.56$ & $9.28\times 10^0$ & $0.973$\\
    $\nicefrac{1}{8192}$ & $\nicefrac{1}{8}$ & $662$ & $5.04\times 10^{-4}$ & $3.25$ & $3.38\times 10^{-2}$ & $2.37$ & $2.96\times 10^0$ & $1.65$\\
    $\nicefrac{1}{8192}$ & $\nicefrac{1}{16}$ & $2853$ & $1.65\times 10^{-5}$ & $4.94$ & $2.17\times 10^{-3}$ & $3.96$ & $3.67\times 10^{-1}$ & $3.01$\\
    $\nicefrac{1}{8192}$ & $\nicefrac{1}{32}$ & $11690$ & $4.17\times 10^{-7}$ & $5.30$ & $1.07\times 10^{-4}$ & $4.34$ & $3.47\times 10^{-2}$ & $3.40$\\
    $\nicefrac{1}{8192}$ &  $\nicefrac{1}{64}$ & $47958$ & $7.28\times 10^{-9}$ & $5.84$ & $3.70\times 10^{-6}$ & $4.86$ & $2.37\times 10^{-3}$ & $3.87$\\
    \hline
  \end{tabular}
  \caption{Test 2: spatial orders of convergence with exact solution
  \eqref{eqn:Test2}.}
    \label{tab:Test2Space}
\end{table}

\begin{figure}
    \centering
    \includegraphics[scale=0.5]{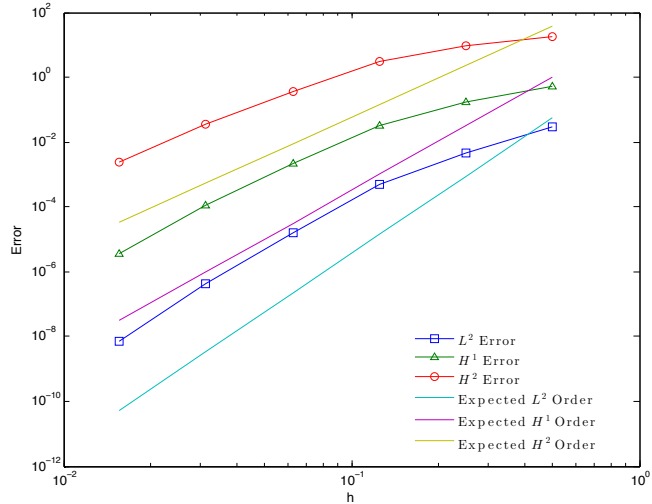}
    \caption{Test 2: orders of convergence in space for the full discretization
    of \eqref{eqn:QGE_psi} with exact solution \eqref{eqn:Test2}.}
  \label{fig:Test2Space}
\end{figure}

  \section{Conclusions} \label{sec:Conclusions}
  In this paper we studied the conforming FE semi-discretization of the pure
streamfunction form of the QGE\@. This semi-discretization requires a $C^1$ FE,
for which we chose the Argyris element. In \autoref{sec:Errors} we developed
rigorous error estimates for the conforming FE semi-discretization of the QGE\@.
For this analysis only the fact that the semi-discretization is conforming was
used. We showed that the orders of convergence are optimal.

Numerical experiments for the QGE (\autoref{sec:Results}) with the Argyris
element, were also carried out. The code which was developed and verified for
the stationary QGE in \cite{Foster} was then modified to deal with
time-dependence. We applied an implicit Euler scheme and verified numerically
the theoretical spatial rates of convergence developed for the
semi-discretization.

The QGE have many unique challenges for numerical modeling. These challenges
include (but are not limited to) unstable solutions, resulting from internal
layers and western boundary layers, and high computational cost for large
domains, such as the North Atlantic. To address these issues we plan to extend
these studies in several directions to include stabilization methods. We are
also interested in incorporating empirical wind-stress data, which will require
parameter estimation techniques.

  \bibliographystyle{siam}
  \bibliography{QGE}
\end{document}